\documentclass[11pt]{amsart}

\usepackage{graphicx}
\usepackage{subcaption}
\usepackage{booktabs}

\usepackage{rainstorm}

\usepackage{hyperref}
\usepackage[noabbrev,capitalise]{cleveref}

\makeatletter
\@ifpackageloaded{newtxmath}{}{\usepackage{amssymb}}

\theoremstyle{plain}
\newtheorem{theorem}{Theorem}[section]
\AddToHook{env/theorem/begin}{\crefalias{section}{theorem}}

\newtheorem{proposition}[theorem]{Proposition}
\newtheorem{corollary}[theorem]{Corollary}
\newtheorem{lemma}[theorem]{Lemma}
\newtheorem{conjecture}[theorem]{Conjecture}
\AddToHook{env/proposition/begin}{\crefalias{theorem}{proposition}}
\AddToHook{env/corollary/begin}{\crefalias{theorem}{corollary}}
\AddToHook{env/lemma/begin}{\crefalias{theorem}{lemma}}
\AddToHook{env/conjecture/begin}{\crefalias{theorem}{conjecture}}

\newtheorem*{theorem*}{Theorem}
\newtheorem*{conjecture*}{Conjecture}

\newtheorem*{corollary*}{}

\newtheorem{question}{Question}

\crefname{question}{Question}{Questions}
\Crefname{question}{Question}{Questions}

\theoremstyle{definition}
\newtheorem{definition}[theorem]{Definition}
\newtheorem{example}[theorem]{Example}

\AddToHook{env/definition/begin}{%
  \crefalias{theorem}{definition}%
  \pushQED{\qed}%
}
\AddToHook{env/definition/end}{\popQED}
\AddToHook{env/example/begin}{\crefalias{theorem}{example}}
\AddToHook{env/notation/begin}{\crefalias{theorem}{notation}}

\theoremstyle{remark}

\crefname{theorem}{Theorem}{Theorems}
\Crefname{theorem}{Theorem}{Theorems}
\crefname{proposition}{Proposition}{Propositions}
\Crefname{proposition}{Proposition}{Propositions}
\crefname{corollary}{Corollary}{Corollaries}
\Crefname{corollary}{Corollary}{Corollaries}
\crefname{lemma}{Lemma}{Lemmas}
\Crefname{lemma}{Lemma}{Lemmas}
\crefname{conjecture}{Conjecture}{Conjectures}
\Crefname{conjecture}{Conjecture}{Conjectures}
\crefname{definition}{Definition}{Definitions}
\Crefname{definition}{Definition}{Definitions}
\crefname{example}{Example}{Examples}
\Crefname{example}{Example}{Examples}
\crefname{notation}{Notation}{Notations}
\Crefname{notation}{Notation}{Notations}

\newcommand{\tri}{\mathcal{T}}
\newcommand{\lst}{\operatorname{LST}}
\newcommand{\manifold}{\mathcal{M}}
\newcommand{\sone}{\mathbb{S}^1}
\newcommand{\stwo}{\mathbb{S}^2}
\newcommand{\sthree}{\mathbb{S}^3}
\newcommand{\RR}{\mathbb{R}}
\newcommand{\ZZ}{\mathbb{Z}}
\newcommand{\QQ}{\mathbb{Q}}

\newcommand{\lensp}{\mathfrak{p}}
\newcommand{\lensq}{\mathfrak{q}}
\newcommand{\pp}{\mathcal{P}}
\newcommand{\qq}{\mathcal{Q}}

\begin{document}

\title{Torus knots as loop-edges in three-sphere triangulations}

\author{Lezhi Lin}
\email{lezhi.lin@sydney.edu.au}
\address{School of Mathematics and Statistics F07, The University of Sydney, NSW 2006 Australia}
\author{Jonathan Spreer}
\email{jonathan.spreer@sydney.edu.au}
\address{School of Mathematics and Statistics F07, The University of Sydney, NSW 2006 Australia}

\subjclass[2020]{Primary 57K10; Secondary 57Q15, 57K30}
\keywords{torus knots, triangulations of 3-manifolds, layered triangulations, triangulation complexity}

\begin{abstract}
We present a procedure that, given a pair of coprime integers, produces a one-vertex triangulation of the three-sphere containing the corresponding torus knot as an edge. The construction is canonical, and produces a three-sphere triangulation from a pair of minimal layered triangulations of the solid torus.

The size of the triangulation is, in the worst case, linear, but can be as small as logarithmic in the crossing number of the knot, motivating this triangulation size as an organising principle for knots, alternative to the crossing number.
\end{abstract}

\maketitle

\section{Introduction}
\label{sec:introduction}

A \emph{torus knot} $T(\pp,\qq)$ in the three-sphere is a knot drawn on the surface of a standardly embedded torus, winding $\pp$ times in meridional direction and $\qq$ times in the longitudinal direction (see \Cref{fig:slope_3d}). Among non-trivial knots they are about as well understood as one could hope, being classified up to ambient isotopy by the coprime pair $(\pp,\qq)$ and accompanied by explicit formulas for many classical invariants~\cite{Kauffman1987,Murasugi1991,Rolfsen2003,Teragaito2004}.

Torus knots also play a significant role in Thurston's geometric classification of prime knot complements~\cite[Corollary~2.5]{Thurston1982}, which partitions knots into torus, satellite and hyperbolic types. As a non-hyperbolic, explicitly parametrised family, torus knots serve as the tractable base case against which other knots are understood.
Beyond classification, torus knots occur 
in the construction of closed manifolds from surgery diagrams~\cite{Lickorish1962,Wallace1961}, in the Heegaard structure of Seifert fibered spaces~\cite{Moriah1988,Schultens1993}, in surgery realisations of lens spaces~\cite{Moser1971}, and in the classification of the knots admitting such surgeries~\cite{KronheimerMrowkaOzsvathSzabo2007,OzsvathSzabo2005}.

We approach torus knots from the combinatorial side of the theory: How efficiently can $T(\pp,\qq)$ be presented as a loop edge in a one-vertex \emph{triangulation} of $\sthree$? 

By Moise's theorem, every compact $3$-manifold can be triangulated~\cite{Moise1952}. The smallest such description, the \emph{complexity} of the manifold, is a natural combinatorial measure of its topological complexity. It is the organising invariant for systematic enumeration of $3$-manifolds~\cite{Burton2011,Matveev2007}.
This concept of complexity transfers naturally to knotted edge-cycles inside triangulations of $3$-manifolds. Rather than measuring a knot $K \subset \manifold$ by its crossing number, or by the complexity of its exterior, one can ask how few tetrahedra are needed to triangulate $\manifold$ so that $K$ appears as a subcomplex, ideally as a single loop-edge of a one-vertex triangulation. This combinatorial counterpart of the crossing number the \emph{triangulation complexity} of the pair $(\manifold, K)$ was put forward in recent work, see~\cite{IbarraMathewsPurcellSpreer2024}, and is interesting for two reasons. First, it couples two quantities -- the size of an ambient triangulation and the geometry of an embedded curve -- under a single combinatorial roof, thus opening a bridge between knot theory and the algorithmic theory of $3$-manifolds. Second, it is the natural setting for manipulating a knot inside a triangulated $3$-manifold without carrying separate combinatorial data, used in recent work on knot factorisation, recognition, and certification~\cite{HeSedgwickSpreer2025,Lackenby2020}.

For pairs $(\manifold, K)$, exact values are known only in a handful of small examples, and matching lower bounds remain elusive. This is unsurprising: even for closed $3$-manifolds, explicit upper and lower complexity bounds have been established only for distinguished families~\cite{LackenbyPurcell2024,JacoRubinsteinSpreerTillmann2020, JacoRubinsteinSpreerTillmann2025,JacoRubinsteinTillmann2009}.

For torus knots in $\sthree$, the natural ambient triangulations are already at hand. Jaco and Rubinstein's theory of \emph{layered triangulations} of solid tori~\cite{JacoRubinstein2006}, reviewed in \Cref{sec:layered-triangulations}, assigns to each coprime pair of integers an explicit one-vertex triangulation of a solid torus whose meridian has a prescribed slope and whose size is governed by the Euclidean algorithm. The framework and its extensions have since been built upon to prove that some layered triangulations of lens spaces are minimal~\cite{JacoRubinsteinSpreerTillmann2025,JacoRubinsteinTillmann2009}, to construct one-vertex triangulations of arbitrary Heegaard splittings~\cite{HeMorganThompson2025}, and to exploit parameterised algorithms for $3$-manifold invariants organised by triangulation treewidth, as outlined in~\cite{HuszarSpreer2019}.

Gluing two such layered solid tori along their common one-vertex boundary torus realises, in a single combinatorial framework, every closed $3$-manifold admitting a genus-$1$ Heegaard splitting, that is, every lens space, $\stwo \times \sone$, and the three-sphere itself~\cite[Section 1.5]{Saveliev2011}. The picture is at once topological (a genus-$1$ Heegaard splitting), combinatorial (a path through Farey triangles), and arithmetic (a run of the Euclidean algorithm), and the three points of view amplify rather than substitute for each other. In this framework, an embedding $T(\pp,\qq) \hookrightarrow \sthree$ amounts to a choice of two layered solid tori whose meridian slopes are in compatible directions. 

\subsection*{Our Contribution}

Exploiting this correspondence, we exhibit in~\Cref{sec:torus-knots}, for every torus knot, a canonical one-vertex triangulation of $\sthree$ realising it as a single edge, of size at most linear in $\max\{\pp,\qq\}$ and hence at most linear in the crossing number of the knot. We conjecture our examples to realise the triangulation complexities of all torus knots $T(\pp,\qq)$. 

Moreover, we observe that, for some infinite families of torus knots, our triangulations are in size a logarithmic function in the crossing numbers of the underlying knots, see~\Cref{tab:canonical-fibonacci} for one such family. This is significant for the following reason: many algorithms in knot theory (see~\cite{Lackenby2020} for an overview) take as input a knot diagram, say, with $n$ crossings, turn this diagram into a triangulation of the knot complement with $O(n)$ tetrahedra, and then run an algorithm on it that is exponential in $n$~\cite{HassLagariasPippenger1999}. Families such as the one from~\Cref{tab:canonical-fibonacci} provide an example where an exponential speed-up can be introduced into this process. 

Code to produce our triangulations is available from \cite{Lin2026Github}.

\subsection*{Acknowledgement}

The authors would like to thank Stephan Tillmann for helpful advice on how to illustrate this article, and Alexander He and Marc Lackenby for sketching the satellite construction. The second author was partially supported by the Australian Research Council under the Discovery Project theme, grant number DP220102588. The authors would also like to acknowledge the University of Sydney unit MATH1964 for indirectly starting this collaboration.

The authors acknowledge the use of GPT-5.6 Sol to check the manuscript for typos, improve its writing style, and assist in drafting and refining some of the source code. 

\section{Preliminaries}
\label{sec:preliminaries}

\subsection{Knots and 3-manifolds}
\label{sec:pre-knots-manifolds}

We recall a number of standard notions from knot theory and $3$-manifold topology. For a comprehensive treatment the reader may consult Rolfsen~\cite{Rolfsen2003} and Hempel~\cite{Hempel1976}.

A \emph{knot} is a smooth embedding $K \colon \sone \hookrightarrow \sthree$, considered up to ambient isotopy. The \emph{crossing number} of $K$, denoted $c(K)$, is the minimum number of crossings over all planar diagrams of $K$. A fundamental object associated with a knot is the closure of the complement of a small open tubular neighbourhood:

\begin{definition}
\label{def:knot-exterior}
Let $K \subset \sthree$ be a knot and let $\nu(K)$ be an open tubular neighbourhood of $K$. The \emph{exterior} of $K$ is the compact $3$-manifold
\[
E(K) = \sthree \setminus \nu(K).
\]
Its boundary $\partial E(K)$, a torus, carries a natural \emph{meridian-longitude basis} $\{\mu, \lambda\}$: the \emph{meridian} $\mu$ is the unique slope (up to isotopy on $\partial E(K)$) bounding a disk in $\nu(K)$, and the \emph{homological longitude} $\lambda$ is the slope that meets $\mu$ in a single point and is null-homologous in $E(K)$.
\end{definition}

With a basis for $H_1(\partial E(K); \ZZ)$ in hand, every essential simple closed curve on the boundary torus can be encoded by a pair of coprime integers. This leads to the following notion.

\begin{definition}
\label{def:slope}
Let $T$ be a torus equipped with a basis $\{[\mu], [\lambda]\}$ of $H_1(T; \ZZ)$. A \emph{slope} on $T$ is an isotopy class of essential simple closed curves on $T$.
Each slope has a representative given by a unique primitive class $b\,[\mu] + a\,[\lambda]$ with $(a, b) \in \ZZ^2$, $\gcd(a, b) = 1$. This is well-defined up to simultaneous sign change. Slopes naturally biject into $\QQ \cup \{\infty\}$ via the assignment $(a, b) \mapsto a/b$. 
\end{definition}

\begin{figure}[t!]
    \centering
    \begin{subfigure}[b]{0.44\textwidth}
        \centering
        \includegraphics[
            width=0.85\textwidth,
            alt={Curve winding twice meridionally and three times longitudinally on a torus; blue arrows mark the basis identifications. Details in the caption.}
        ]{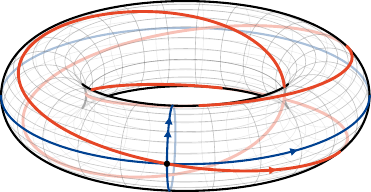}
        \vspace{6pt}
        \caption{$T(2,3)$ on the torus.}
        \label{fig:slope_3d}
    \end{subfigure}\hfill
    \begin{subfigure}[b]{0.54\textwidth}
        \centering
        \includegraphics[
            scale=1,
            alt={Universal-cover grid with the slope vector from the origin to three comma two, representing two mu plus three lambda. Details in the caption.}
        ]{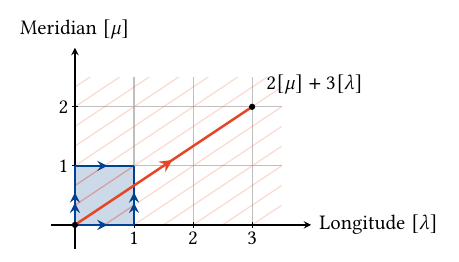}
        \caption{Lift to the universal cover.}
        \label{fig:slope_2d}
    \end{subfigure}

    \caption{The slope $3/2$, representing $2[\mu]+3[\lambda]$. (\ref{sub@fig:slope_3d}) On the torus, the ochre curve winds twice meridionally and three times longitudinally; the blue longitude and meridian loops carry one and two identification arrows, respectively. (\ref{sub@fig:slope_2d}) In the universal cover, the same class is the vector $(3,2)$.}
    \label{fig:slope}
\end{figure}

Slopes provide the gluing recipe for constructing closed $3$-manifolds out of manifolds with torus boundary. The central operation here is given by the notion of {\em Dehn filling}.

\begin{definition}
\label{def:dehn-filling}
Let $\manifold$ be a compact orientable $3$-manifold whose boundary contains a torus component $T$, and let $\alpha$ be a slope on $T$. The \emph{Dehn filling} of $\manifold$ along $\alpha$, written $\manifold(\alpha)$, is the $3$-manifold obtained by gluing a solid torus to $T$ so that its meridian is identified with $\alpha$.
\end{definition}

The following classical result, due independently to Lickorish~\cite{Lickorish1962} and Wallace~\cite{Wallace1961}, asserts that Dehn surgery suffices to produce every closed $3$-manifold.

\begin{theorem}[Lickorish--Wallace]
\label{thm:lickorish-wallace}
Every closed, connected, orientable $3$-manifold can be obtained by Dehn surgery on a link in $\sthree$.
\end{theorem}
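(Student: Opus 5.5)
The plan is the classical Lickorish argument. It proceeds in three steps: reduce to a diffeomorphism of a handlebody boundary via Heegaard splittings, factor that diffeomorphism into Dehn twists, and realise each twist by surgery. First, a closed, connected, orientable $3$-manifold $\manifold$ admits a Heegaard splitting $\manifold = H_g \cup_h H_g$. Here $H_g$ is a genus-$g$ handlebody and $h$ is an orientation-reversing homeomorphism of $\partial H_g$. This follows from Moise's theorem~\cite{Moise1952}: triangulate $\manifold$, and take a regular neighbourhood of the $1$-skeleton and its complement, which is a regular neighbourhood of the dual $1$-skeleton. Since $\sthree$ also has a genus-$g$ splitting $\sthree = H_g \cup_{r} H_g$ for every $g$ (stabilise the standard one), we may write $h = r \circ f$ with $f$ an orientation-preserving homeomorphism of $\partial H_g$. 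It then suffices to show that changing the gluing by $f$ is effected by surgery on a link.

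Second, we invoke Lickorish's theorem that the mapping class group of a closed orientable surface is generated by Dehn twists along finitely many curves, the $3g-1$ Lickorish curves. I expect this step to be the main obstacle. If proving it rather than citing it, I would first show the following by induction on the intersection number: any two nonseparating simple closed curves are related by a product of twists. To lower intersection numbers, use the standard lemma that if $|a\cap b|=1$ then $T_aT_b(a)=b$. Then I would cut along a curve to reduce to a surface of lower genus with boundary, and conclude by induction. Isotopy versus homeomorphism is handled by the fact that homotopic homeomorphisms of surfaces are isotopic (Baer–Epstein). Writing $f = T_{c_1}^{\pm1}\cdots T_{c_n}^{\pm1}$, it suffices to treat a single twist.

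Third, a single Dehn twist $T_c$ along a curve $c \subset \partial H_g$ is realised by surgery. Push $c$ slightly into a collar $\partial H_g \times [0,1]$ to a curve $c'$. Removing a tubular neighbourhood of $c'$ and regluing it with surgery coefficient $\pm1$ relative to the surface framing yields the same manifold as cutting along the level surface $\partial H_g \times \{1/2\}$ and regluing by $T_c^{\pm1}$. To see this, note that the twist is supported in an annulus neighbourhood $A$ of $c$. The thickened annulus $A\times[0,1]$ is a solid torus, and the twist homeomorphism extends across the cut to the $\pm1$ filling of it. Performing this at distinct collar levels for $c_1,\dots,c_n$ gives disjoint curves in $\sthree$, that is, a link $L$. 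Surgery on $L$ then produces $H_g \cup_{r\circ f} H_g = \manifold$.

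The remaining bookkeeping is checking orientations and signs of the surgery coefficients. It is routine once the local model in the thickened annulus is verified.
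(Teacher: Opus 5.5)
The paper does not prove this theorem; it quotes it as classical background with references to Lickorish and Wallace. Your proposal is Lickorish's original argument from the cited paper: a Heegaard splitting, factoring $r^{-1}\circ h$ into Dehn twists, and realising each twist as $\pm1$ surgery, relative to the surface framing, on a copy of the curve pushed into a collar of the Heegaard surface. It is correct as outlined, and the only work left is the sign and composition-order bookkeeping you already identify.
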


The simplest closed $3$-manifolds arising from Dehn filling are the lens spaces, each of which admits a genus-$1$ Heegaard splitting into two solid tori. They will play a central role in the constructions of~\Cref{sec:layered-triangulations}.

\begin{definition}[{\cite{Tietze1908}}]
\label{def:lens-space}
Let $\lensp , \lensq \ge  0$, and $\gcd(\lensp, \lensq) = 1$. The \emph{lens space} $L(\lensp, \lensq)$ is the closed orientable $3$-manifold obtained by gluing two solid tori along their boundary tori via a homeomorphism that sends the meridian of one solid torus to the slope $\lensp/\lensq$ on the boundary of the other, with respect to the meridian-longitude basis.
\end{definition}

Lens spaces are classified up to homeomorphism by the following result due to \cite{Reidemeister1935,Whitehead1941}, see also{~\cite[Section~1.5]{Saveliev2011}}.

\begin{theorem}
\label{thm:lens-class}
Let $\lensp, \lensp', \lensq, \lensq' \ge 0$ with $\gcd(\lensp, \lensq) = \gcd(\lensp', \lensq') = 1$.
Then $L(\lensp, \lensq)$ and $L(\lensp', \lensq')$ are homeomorphic if and only if $ \lensp = \lensp'$ and $\lensq' \equiv \pm \lensq^{\pm 1} \pmod {\lensp}$. Moreover, an orientation-preserving homeomorphism exists if and only if $\lensq' \equiv \lensq^{\pm 1} \pmod {\lensp}$.
\end{theorem}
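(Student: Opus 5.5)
We split the statement into its two directions and treat them separately: sufficiency, by building explicit homeomorphisms, and necessity, by finding invariants that tell non-homeomorphic lens spaces apart. Throughout we use \Cref{def:lens-space}. It lets us replace the meridian image $\lensp/\lensq$ by any slope equivalent to it under self-homeomorphisms of a solid torus, since these preserve the homeomorphism type of the result. The self-homeomorphisms of $D^2\times\sone$ act on the boundary slopes by $\mu\mapsto\pm\mu$ and $\lambda\mapsto\pm\lambda+k\mu$.

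\emph{Sufficiency.} On one side, a Dehn twist of the second solid torus along its meridian disk sends $a/b$ to $a/(b+ka)$, so $\lensq\mapsto\lensq+k\lensp$. Hence only $\lensq \bmod \lensp$ matters. Composing with the reflection $\lambda\mapsto-\lambda$ gives $\lensq\mapsto-\lensq$; this reverses orientation unless it is paired with a compensating reflection, which accounts for the $\pm$ sign. Swapping the roles of the two solid tori inverts the gluing matrix in $GL_2(\ZZ)$. The inverse of $\begin{pmatrix}\lensq & r\\ \lensp & s\end{pmatrix}$ has lower-left entry $\mp\lensp$ and diagonal entry $s$, and $\lensq s\equiv 1\pmod{\lensp}$ because the determinant is $\pm1$. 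So $L(\lensp,\lensq)\cong L(\lensp,\lensq^{-1})$, and this swap preserves orientation. Tracking determinants gives the orientation-preserving version: $\lensq'\equiv\lensq^{\pm1}$.

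\emph{Necessity.} First, $H_1(L(\lensp,\lensq))\cong\ZZ/\lensp$, computed from the gluing via Mayer--Vietoris or van Kampen. This forces $\lensp=\lensp'$, where we read $\lensp=0$ as $\stwo\times\sone$ with $H_1=\ZZ$. The cases $\lensp\le1$ are then immediate.

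For the residue of $\lensq$, we would first try the linking form $H_1\times H_1\to\QQ/\ZZ$. On a generator it takes the value $\pm\lensq/\lensp$, or with our conventions possibly $\pm\lensq^{-1}/\lensp$. A homeomorphism sends the generator $g$ to $ug$, which multiplies the form by $u^2$. This yields only $\lensq'\equiv\pm u^2\lensq^{\pm1}$, a homotopy-type invariant. It is strictly weaker than the claim: $L(7,1)$ and $L(7,2)$ show this.

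The main obstacle is therefore genuinely the simple-homotopy part. Here I would invoke Reidemeister torsion (Reidemeister, Franz). For the cyclic cover one computes
\[
\tau\bigl(L(\lensp,\lensq)\bigr)=(t^{a}-1)(t^{b}-1)
\]
up to units $\pm t^k$ in $\ZZ[\ZZ/\lensp]$, with $a\lensq\equiv 1$ and $b\equiv1$ suitably. Franz's independence lemma then says that the equality $(t^{a}-1)(t^{b}-1)\doteq(t^{ua'}-1)(t^{ub'}-1)$ forces the exponent sets to agree up to sign. This yields $\lensq'\equiv\pm\lensq^{\pm1}$. Topological invariance of torsion, via Chapman or via Moise's Hauptvermutung in dimension $3$ \cite{Moise1952}, ensures it is a homeomorphism invariant.

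For the oriented refinement, we keep track of the sign of the fundamental class, that is, of the orientation-induced identification. This excludes the $-$ case unless $\lensq^2\equiv-1$. Since this is a classical theorem, in the paper it suffices to cite \cite{Reidemeister1935,Whitehead1941,Saveliev2011}.
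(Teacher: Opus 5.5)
Your outline is correct and matches what the paper does. The paper gives no argument of its own for \Cref{thm:lens-class} but cites \cite{Reidemeister1935,Whitehead1941,Saveliev2011}, and your sketch is the classical proof behind those references: explicit homeomorphisms for sufficiency; and, for necessity, $H_1$, the insufficiency of the linking form (witnessed by $L(7,1)\simeq L(7,2)$), and Reidemeister--Franz torsion made topologically invariant via Moise.

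Two points need tightening if you write it out. In the swap step the determinant only gives $\lensq s\equiv\pm1 \pmod{\lensp}$, not $\lensq s\equiv 1$; for the orientation-compatible gluing, which has determinant $-1$ in matching bases, the new parameter is $-s\equiv\lensq^{-1}$. In the oriented refinement, the unsigned torsion is blind to orientation, so say explicitly that the torsion comparison pins down the multiplier $u$ up to sign in each of its two cases, while an orientation-preserving homeomorphism preserves the linking form exactly; together these force $\lensq'\equiv\lensq^{\pm1}$.
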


\subsection{Triangulations}
\label{sec:pre-triangulations}

Throughout this article, we work with \emph{generalised triangulations} in the sense of Matveev~\cite{Matveev2007} and Thurston~\cite[Section~3.2]{Thurston1997}.

\begin{definition}
\label{def:triangulation}
A \emph{triangulation} $\tri$ of a closed $3$-manifold $\manifold$ is a realisation of $\manifold$ as a quotient of finitely many tetrahedra by affine identifications of their $2$-faces in pairs. Such identifications are permitted to pair distinct faces, edges, or vertices of the same tetrahedron, so long as no edge is identified with itself in reverse, and the resulting quotient is homeomorphic to $\manifold$. We write $\tri^{(k)}$ for the set of $k$-cells ($k = 0, 1, 2, 3$) of~$\tri$.
\end{definition}

The local combinatorial structure at each vertex is captured by the vertex link: the \emph{link} of a vertex $v \in \tri^{(0)}$ is the boundary of a small regular neighbourhood of $v$. For a triangulation of a closed $3$-manifold, every vertex link is necessarily a $2$-sphere. A triangulation is called a one-vertex triangulation if it has exactly $1$ vertex, that is, if $|\tri^{(0)}| = 1$. We have the following result.

\begin{theorem}[Moise~\cite{Moise1952}]
\label{thm:moise}
Every compact $3$-manifold admits a triangulation.
Moreover, any two triangulations of the same compact $3$-manifold admit a common subdivision.
\end{theorem}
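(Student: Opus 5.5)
The first assertion, existence of a triangulation, will come from Moise's theorem that every compact $3$-manifold $\manifold$ admits a smooth (equivalently PL) structure. The second assertion will then be a version of the Hauptvermutung, deduced from uniqueness of that structure.

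For existence, I will first produce a PL structure on $\manifold$ by working in coordinate charts. Cover $\manifold$ by finitely many closed balls (half-balls near $\partial\manifold$), each with a homeomorphism to a standard simplex. I then triangulate inductively over the cover. Suppose a PL structure already exists on an open set $U$. Adding a chart $V$ leaves $U\cap V$ with two PL structures: one inherited from $U$, one from the linear structure of $V$.

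The key local tool is Bing--Moise taming: a homeomorphism between open subsets of $\RR^3$ can be approximated, relative to a closed subset on which it is already PL, by a PL homeomorphism. This $\epsilon$-approximation is the step I expect to be the main obstacle. It requires showing that every $2$-sphere in $\RR^3$ can be approximated by tame (PL) spheres from each side, which is Bing's side-approximation theorem.

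I would try to reduce it to the Schoenflies theorem in dimension three for PL spheres. Then I would push wild points off using a Bing shrinking or tube argument: thicken the surface, take a fine cubical grid, and fill handles using Dehn's lemma and the loop theorem to fix the complementary pieces. With approximation in hand, I re-embed the chart so its structure agrees with the one on $U$ near $U\cap V$. I conclude by noting that a PL manifold is a simplicial complex, so it is a triangulation in the generalised sense of \Cref{def:triangulation}; a barycentric subdivision gives a genuine simplicial triangulation.

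For common subdivision, given two triangulations $\tri_1$ and $\tri_2$, I will apply the same approximation to the identity map $\tri_1\to\tri_2$. This yields a PL homeomorphism $h$ isotopic to the identity, in fact $\epsilon$-close to it. Then $\tri_1$ and $h(\tri_2)$ are PL-compatible, so they have a common linear subdivision, obtained by intersecting simplices and triangulating the resulting convex cells. After adjusting by the small isotopy, this gives the common subdivision up to the standard interpretation, namely combinatorial equivalence of the two triangulations.
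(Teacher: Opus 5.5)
The paper does not prove this statement; it quotes it from Moise and uses only the citation. Your global architecture is the classical one and is sound:
\begin{itemize}
\item a relative PL-approximation theorem for homeomorphisms between open subsets of $\RR^3$;
\item a chart-by-chart induction that produces a PL structure;
\item approximating the identity $|K_1|\to|K_2|$ by a PL homeomorphism, which by the definition of PL gives subdivisions $K_1'\cong K_2'$.
\end{itemize}
You are right that ``common subdivision'' must be read up to isomorphism. The literal statement fails for $\tri_2=f(\tri_1)$ when $f$ is a non-PL self-homeomorphism. Your intersecting-simplices step and the isotopy are unnecessary. Also, $\epsilon$-closeness alone does not give an isotopy without further input.

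\textbf{The gap.} The whole content of the theorem lies in the approximation step, and there your outline is both inaccurate and incomplete.

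First, the lemma as you state it is false. Suppose every $2$-sphere $S\subset\RR^3$ could be approximated, via $\epsilon$-homeomorphisms, by PL spheres lying entirely in a given complementary domain $U$. Then every compact subset of $U$ would eventually lie in the complementary component of the approximating sphere that is contained in $U$. That component bounds a PL ball in $\sthree$ by Alexander's theorem, so $U$ would be simply connected. The wild side of Alexander's horned sphere contradicts this. Bing's side approximation theorem only produces PL spheres that lie in $U$ except on finitely many small disjoint disks, and controlling those exceptional disks is where the real work is.

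Second, even a correct approximation theorem for spheres does not by itself approximate a homeomorphism $h$. The image under $h$ of the $2$-skeleton of a fine triangulation is a possibly wild $2$-complex, branched along the image of the $1$-skeleton. All its cells must be replaced simultaneously by PL cells that:
\begin{itemize}
\item are close to the originals;
\item meet only along common faces;
\item bound PL balls on the correct sides (via the PL Schoenflies theorem).
\end{itemize}
Only then can $h$ be redefined skeleton by skeleton. Your outline does not address this.

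Your closing sentences on thickening, cubical grids, ``Bing shrinking'' (a decomposition-theory tool, not a sphere-approximation one), Dehn's lemma and the loop theorem list tools rather than give an argument for either point. So the key step remains unproved. For the purposes of this paper you should simply cite Moise (or Bing's alternative proof), as the authors do.
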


Since every compact $3$-manifold can be triangulated, it is natural to ask how efficiently. The \emph{complexity} of a closed $3$-manifold $\manifold$ is the minimum number of tetrahedra over all triangulations of $\manifold$; see Matveev~\cite[Section 2.1]{Matveev2007} for a systematic treatment, where his notion of complexity differs from ours for $\sthree$, $\RR P^3$, $\stwo \times \sone$ and $L(3,1)$.

\section{Layered triangulations of the three-sphere}
\label{sec:layered-triangulations}

Layered triangulations of the solid torus were first described by Jaco and Rubinstein in \cite{JacoRubinstein2006}. We briefly review their construction.

\subsection{Layered triangulations of solid tori and lens spaces}

We begin with the standard base case for the layered construction: the solid torus obtained from the $0$-tetrahedron M\"obius band, as the core of the one-tetrahedron layered solid torus, see \Cref{fig:mobius-base} for an illustration. This can be realised by taking a single tetrahedron $\Delta=(0,1,2,3)$, and identifying triangles $(0,1,2)$ with $(1,2,3)$ by sending $0$ to $1$, $1$ to $2$, and $2$ to $3$. The meridian disk intersects the three edges of the boundary of this one-tetrahedron solid torus $1$, $2$, and $3$ times, respectively. More general layered solid tori are then built by successively attaching tetrahedra along two boundary triangles, covering a boundary edge of the previous complex.

\begin{figure}[t!]
    \centering
    \includegraphics[
        scale=1,
        alt={Diagram of the Moebius band thickened to a solid torus and cut open to a triangle and to a quadrilateral. Details in the caption.}
    ]{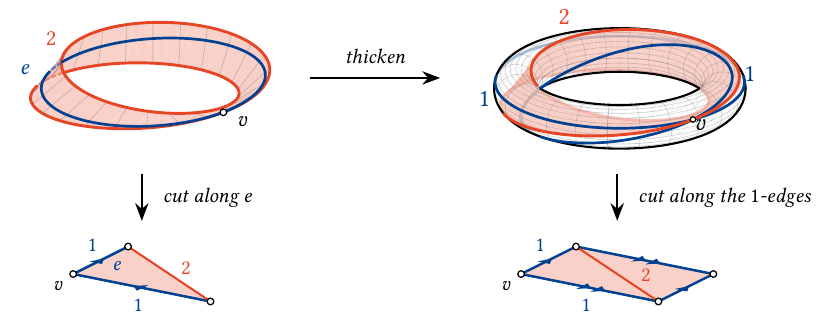}
    \caption{The $0$-tetrahedron M\"obius band (upper left), its thickening to a solid torus (upper right), and cuts along $e$ and the $1$-edges yielding the M\"obius triangle and one-vertex boundary quadrilateral below. Blue arrows pair the $1$-edges, ochre marks the $2$-edge, and $v$ is the unique vertex.}
    \label{fig:mobius-base}
\end{figure}

\begin{definition}[Layering step]
\label{def:layering-step}
Let $\tri$ be a triangulation of the solid torus with boundary $\partial \tri$ its unique one-vertex $2$-triangle triangulation, shown in Figure~\ref{fig:mobius-base}. A \emph{layering step} over boundary edge $e$ consists of attaching a tetrahedron to $\tri$ along the two boundary triangles of $\partial \tri$ covering boundary edge $e$.     
\end{definition}

By construction, the triangulation $\tri'$ resulting from a layering step on a one-vertex boundary solid torus is again a triangulated solid torus. On the boundary, this operation replaces the edge $e$ by the opposite diagonal in the quadrilateral formed by the two incident triangles with diagonal $e$. Combinatorially, the boundary is still the same one-vertex triangulation of the torus, but its embedding has changed.
See \Cref{fig:layering-step} for an illustration.

\begin{figure}[t!]
    \centering
    \begin{subfigure}[b]{0.24\textwidth}
        \centering
        \includegraphics[
            scale=1,
            alt={Panel one. A tetrahedron lies above two boundary faces sharing edge e; dashed arrows show the intended gluing. Details in caption.}
        ]{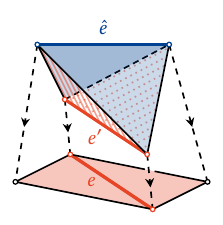}
        \caption{Old boundary edge $e$.}
        \label{fig:layering1}
    \end{subfigure}\hfill
    \begin{subfigure}[b]{0.24\textwidth}
        \centering
        \includegraphics[
            scale=1,
            alt={Panel two. A tetrahedron is positioned so that its edge e prime is matched with the old edge e. Details in the caption.}
        ]{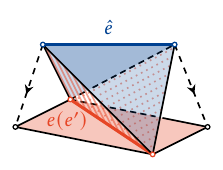}
        \caption{Matching $e'$ to $e$.}
        \label{fig:layering2}
    \end{subfigure}\hfill
    \begin{subfigure}[b]{0.24\textwidth}
        \centering
        \includegraphics[
            scale=1,
            alt={Panel three. The tetrahedron is glued along the two boundary faces incident to the edge e. Details in the caption.}
        ]{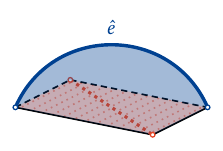}
        \caption{Gluing incident faces.}
        \label{fig:layering3}
    \end{subfigure}\hfill
    \begin{subfigure}[b]{0.24\textwidth}
        \centering
        \includegraphics[
            scale=1,
            alt={Panel four. After gluing the old edge e lies inside the solid torus and the new boundary edge e hat appears. Details in the caption.}
        ]{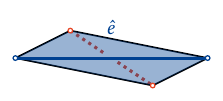}
        \caption{New boundary edge $\hat e$.}
        \label{fig:layering4}
    \end{subfigure}

    \caption{A layering step on a triangulated torus (read left to right): dashed arrows match $e'$ to $e$, the two hatched incident faces are glued, and the dotted old diagonal becomes interior while $\hat e$ is the new boundary edge. The move preserves the solid torus.}
    \label{fig:layering-step}
\end{figure}

While a layering step changes how the boundary triangulation is embedded, it preserves the underlying manifold, which remains a solid torus. Iterating this move, by choosing a boundary edge for each layering step, produces a multi-parameter family of triangulated solid tori where the embedding of the one-vertex boundary triangulation can be deduced from the choice of edges to be layered over in each step. Note that there are infinitely many layered solid tori with the meridian disk intersecting boundary edges $p$, $q$ and $p+q$ times (where $(p,q)$ is a reduced pair and $p \le q$). Of these infinitely many triangulations there is a unique one that is smallest, which we denote by $\lst(p, q)$ (it is precisely the triangulation in this family where we never layer over a newly introduced edge). In \cite{JacoRubinstein2006}, these triangulations are called \emph{minimally layered}.

To organise these embeddings of the boundary triangulation, we record the slopes of (oriented) boundary edges as integer pairs, as defined in \Cref{def:slope}, see \Cref{fig:slope}.

The Farey tessellation of the hyperbolic disk, shown in~\Cref{fig:farey-path-disk}, provides a convenient combinatorial model for these slopes: its vertices are the elements of $\QQ\cup\{\infty\}$, and the three boundary edge slopes arising from a one-vertex torus triangulation are pairwise adjacent, determining a triangle in the Farey tessellation (see~\Cref{fig:farey-triangle-boundary}). A layering step then corresponds to passing to an adjacent triangle.
The associated tree of slopes, shown in~\Cref{fig:farey-path-tree}, is the restriction to $[0,1]$ of the Stern--Brocot tree~\cite{Stern1858,Brocot1861}, and the subtraction form of the Euclidean algorithm used throughout is the classical \emph{anthyphairesis}~\cite{Fowler1987}.
In~\cite{JacoRubinstein2006}, this same mechanism is described by the closely related $L$-graph. The following theorem makes this precise.

\begin{theorem}[Farey tessellation and Farey paths, see {\cite[Chapter 1]{HatcherTopologyOfNumbers}}]
\label{thm:farey}
The vertices of the \emph{Farey tessellation} $\alpha=a/b$ and $\beta=c/d$, written in reduced form, are joined by an edge if and only if the corresponding slopes intersect minimally on the torus, equivalently if
\[
|a\,d - b\,c|=1.
\]
Triples of pairwise adjacent slopes determine Farey triangles, and a path through adjacent Farey triangles is called a \emph{Farey path}. A Farey path between two slopes $a/b$ and $c/d$ is a sequence of adjacent Farey triangles whose first triangle contains $a/b$ and whose last triangle contains $c/d$.
\end{theorem}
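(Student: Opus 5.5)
The substantive content of \Cref{thm:farey} is the equivalence: two reduced slopes $a/b$ and $c/d$ are joined by a Farey edge exactly when they can be realised on the torus by simple closed curves meeting once, which happens exactly when $|ad-bc|=1$. The remaining sentences only define Farey triangles and Farey paths. The plan is therefore to prove that for the standard square torus $T=\RR^2/\ZZ^2$, with basis $\{[\mu],[\lambda]\}$ as in \Cref{def:slope}, the minimal geometric intersection number of the slopes $a/b$ and $c/d$ equals $|ad-bc|$. The Farey adjacency condition is the case where this number is $1$, since the Farey tessellation is defined by declaring $a/b$ and $c/d$ adjacent iff $|ad-bc|=1$.

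The first step is the algebraic intersection count. The curves have homology classes $v=(b,a)$ and $w=(d,c)$, and on the torus the algebraic intersection form is the determinant. Hence their algebraic intersection number is $\pm(ad-bc)$, and this gives the lower bound $i(\alpha,\beta)\ge |ad-bc|$ for any transverse representatives.

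The second step is realising the bound with straight-line representatives. Take the images in $T$ of the straight lines through the origin with directions $v$ and $w$. Each is a simple closed curve because $v$ and $w$ are primitive. In the universal cover, the intersection points of these two geodesics correspond to the lattice points of $\ZZ^2$ lying in the fundamental parallelogram spanned by $v$ and $w$, counted modulo the lattice. There are exactly $|\det(v,w)|=|ad-bc|$ of them, for example by Pick's theorem or the index $[\ZZ^2:\langle v,w\rangle]$. Moreover all of these intersections have the same sign, so the geometric count equals the algebraic count and the bound is attained. In particular the minimal intersection is $1$ iff $|ad-bc|=1$, which is the stated equivalence.

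The third step justifies calling triples of pairwise adjacent slopes Farey triangles. If $|ad-bc|=1$, then the unimodular matrix with columns $v,w$ lies in $GL_2(\ZZ)$, so $\{v,w\}$ is a basis of $\ZZ^2$. The slopes adjacent to both are then exactly $v\pm w$, namely $(a+c)/(b+d)$ and $(a-c)/(b-d)$. So each Farey edge lies in exactly two Farey triangles, and moving across an edge replaces the third vertex by its flip. This matches the diagonal exchange in a layering step (\Cref{def:layering-step}), so Farey paths record sequences of layerings; the notion of a Farey path itself is just a definition.

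The main obstacle is the equality of geometric and algebraic intersection for the straight representatives, that is, showing no bigon can reduce the count. I would handle it by noting that two straight lines in $\RR^2$ meet at most once, so no lifts bound a bigon, and then applying the bigon criterion. Alternatively, I would simply cite \cite[Chapter 1]{HatcherTopologyOfNumbers}, as the paper does.
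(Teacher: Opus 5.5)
Your argument is correct, but it is not the paper's route. The paper gives no argument of its own and simply defers to Hatcher. There the Farey diagram is constructed directly, by repeatedly inserting mediants (equivalently, as the $PSL_2(\ZZ)$-orbit of the ideal triangle with vertices $0/1$, $1/0$, $1/1$), and the criterion $|ad-bc|=1$ is derived as a property of that construction.

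You instead take $|ad-bc|=1$ as the definition of adjacency and prove the torus characterisation from scratch. The algebraic intersection form gives the lower bound $|ad-bc|$. The straight representatives through the origin realise it, meeting in $[\ZZ^2:\langle v,w\rangle]=|ad-bc|$ points, all of the same sign.

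What your route buys is a self-contained proof of exactly what the paper uses later. The three edges of a one-vertex torus have pairwise Farey-adjacent slopes, and a layering step is the flip $v+w\leftrightarrow v-w$ across a Farey edge, which your third step makes explicit.

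What it does not give, and the cited construction does, is that the graph defined by the determinant condition really is the tessellation of the disk shown in the figures. That means its geodesic edges are pairwise non-crossing and its triangles fill the disk. Your observation that every edge lies in exactly two triangles is the combinatorial shadow of this, not a proof of it.

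Finally, the step you flag as the main obstacle is not one. You already have $i(\alpha,\beta)\ge|ad-bc|$ from the algebraic intersection number, and explicit representatives meeting in exactly $|ad-bc|$ points. Together these establish minimality, so no bigon criterion is needed.
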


\begin{figure}[t!]
    \centering
    \includegraphics[
        scale=1,
        alt={Diagram matching a one-vertex torus quadrilateral with slopes zero over one, one over one, and one over two to a Farey triangle. Details in caption.}
    ]{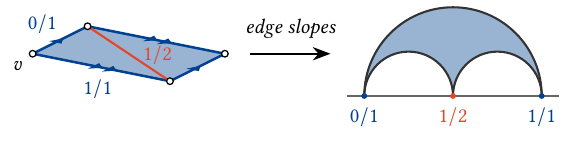}
    \caption{The three boundary edge slopes of a one-vertex torus triangulation ($0/1$, $1/1$, and diagonal $1/2$; left) span a Farey triangle (right).}
    \label{fig:farey-triangle-boundary}
\end{figure}

Every embedding of the one-vertex triangulation of the torus determines a Farey triangle via the slopes of its three edges, and a single layering step replaces one slope by the opposite vertex of an adjacent Farey triangle. Accordingly, a sequence of layering moves determines a Farey path, and conversely a boundary slope can be tracked by following such a path in the tessellation.

For a prescribed reduced slope $a/b$, we consider a Farey path from the vertex $a/b$ to $1/1$, with each edge corresponding to a single layering step. The subtraction form of the Euclidean algorithm provides precisely such a path, and hence yields the following extension result \cite[Theorem 4.3]{JacoRubinstein2006}.

\begin{theorem}
\label{thm:euclidean-alg}
    Let $\partial T$ be a one-vertex triangulation embedded on the boundary of a solid torus. Then $\partial T$ extends to a layered triangulation of the solid torus by a sequence of layering steps determined by the Euclidean algorithm.
\end{theorem}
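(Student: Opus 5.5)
We interpret the statement as follows. The boundary $\partial T$ is a one-vertex two-triangle torus whose three edges have slopes forming a Farey triangle, measured in the meridian-longitude basis of the solid torus. We must exhibit a layered triangulation of the solid torus restricting to $\partial T$ on the boundary. The plan is to run the layering construction backwards from the base case. Start with the one-tetrahedron solid torus built from the M\"obius band; its boundary edges are met by the meridian disk $1$, $2$ and $3$ times. Then apply layering steps, each of which moves the boundary Farey triangle to an adjacent one, until the boundary Farey triangle coincides with the one determined by $\partial T$. Since the layering step of \Cref{def:layering-step} preserves the solid torus, we obtain a triangulated solid torus. Its boundary is combinatorially the unique one-vertex torus triangulation, embedded with the same three slopes as $\partial T$. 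Matching slopes then gives the identification with $\partial T$, up to isotopy of the boundary curves.

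The key arithmetic input replaces slopes by geometric intersection numbers with the meridian. Let $(p,q,p+q)$ be the intersection numbers of the meridian with the three edges of $\partial T$, ordered so that $p\le q$ and $\gcd(p,q)=1$. The third edge has count $p+q$ because the three slopes are pairwise Farey-adjacent, so the meridian count of one edge is the sum or difference of the other two (\Cref{thm:farey}). We induct on $p+q$ using the subtraction Euclidean algorithm. A layering step over the edge with count $p+q$ replaces it by the opposite diagonal of the quadrilateral. That diagonal has count $|q-p|$, so the new triple is $(p,\,q-p,\,q)$. Reversing this, if we can realise the triple $(\min(p,q-p),\max(p,q-p),q)$, then one further layering over the edge with count $\max(p,q-p)$ produces $(p,q,p+q)$. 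This gives the correct new diagonal by the same sum-or-difference rule in the Farey triangle. The recursion strictly decreases $p+q$ and terminates at $(1,2,3)$, which is the base M\"obius solid torus. There is also a degenerate terminus $(0,1,1)$ or $(1,1,2)$, where the meridian is itself an edge; this is handled by a single extra layering on the base, or by folding, as in \cite{JacoRubinstein2006}.

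The main obstacle is justifying that intersection counts determine the embedding of $\partial T$ up to homeomorphism of the solid torus. Knowing that the meridian meets the edges $p$, $q$, $p+q$ times fixes the slopes only up to sign choices and Dehn twists along the meridian. I would handle this as follows. Homeomorphisms of the solid torus act on boundary slopes by the stabiliser of the meridian, namely $\pm$ twists $\mu\mapsto\pm\mu$, $\lambda\mapsto\pm\lambda+k\mu$. Two Farey triangles with the same meridian intersection numbers differ by such a map. Composing the constructed triangulation with this homeomorphism yields the required extension. Finally, I would record that the number of layering steps is the number of subtraction steps of the Euclidean algorithm on $(p,q)$, giving the minimal layered triangulation $\lst(p,q)$.
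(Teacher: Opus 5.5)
Your route is the one the paper takes. The paper only illustrates the theorem by an example and defers to Jaco--Rubinstein: encode $\partial T$ by the meridian intersection triple $(p,q,p+q)$, run the subtraction Euclidean algorithm, and read it backwards as a layering sequence from the M\"obius base. Your reduction to the meridian stabiliser is correct and covers a point the paper leaves implicit. Orient $v_1,v_2$ and $w_1,w_2$ so that their algebraic intersections with $\mu$ are $p,q$. Then the linear map $v_i\mapsto w_i$ preserves $\det(\mu,\cdot)$, so it has the form $\mu\mapsto\pm\mu$, $\lambda\mapsto\lambda+k\mu$, and it extends over the solid torus.

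The inductive step, however, is wrong as written. Layering over a non-maximal edge replaces its count by the sum of the other two counts. So to undo the move that turned the $(p+q)$-edge into the $(q-p)$-edge, you must layer over the edge of count $q-p$, not the edge of count $\max(p,q-p)$. These differ whenever $q<2p$. In that case layering $(q-p,\,p,\,q)$ over its $p$-edge gives $(q-p,\,q,\,2q-p)$, not $(p,q,p+q)$, so your justification via the sum-or-difference rule also fails there. For example, with $(p,q)=(2,3)$ your rule layers $(1,2,3)$ over its $2$-edge and yields $(1,3,4)$ instead of $(2,3,5)$. In fact, always layering over the larger non-maximal edge only ever produces triples $(1,k,k+1)$, that is, the solid tori $\lst(1,k)$.

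The fix is to layer over the edge carrying the new remainder $q-p$ of the subtraction step, whichever of $p$ and $q-p$ is larger. This is exactly the paper's prescription of reading the subtraction sequence backwards. A smaller point concerns the degenerate input $(0,1,1)$: the recursion never reaches it, and it needs two backward layerings on $(1,2,3)$, first over the $3$-edge and then over the $2$-edge, not one. With these corrections your argument goes through.
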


Rather than going through a proof of \Cref{thm:euclidean-alg}, we illustrate the statement by an example.

\begin{example}
We record how many times the boundary of the meridian disk intersects the three edges of $\partial T$ as the triple $(a,b,a+b)$.
We then apply the Euclidean algorithm to $(b,a)$ in subtraction form.  
This produces a sequence of triples which, when read in reverse, gives a layering sequence from the base case $(1,1,2)$ with vertices $(0/1,1/1,1/2)$, that is, the $0$-tetrahedron M\"obius band $\lst(1,1)$ inside the one-tetrahedron layered solid torus $\lst(1,2)$, to the required boundary triangulation. See \Cref{fig:farey-path-pq} for a visualisation of this process for $a=3$ and $b=5$. 

Choosing $(0/1,1/1,1/2)$ as base triangle in \Cref{fig:farey-path-pq} is motivated by our choice to start the layering process with the core M\"obius band (see~\Cref{fig:mobius-base}). In particular, we do not consider any layered solid tori with creased $3$-cells at their core, see \cite[Section 4.2]{JacoRubinstein2006} for details about this other case.
\end{example}

\begin{figure}[t!]
    \centering
    \begin{subfigure}[b]{0.44\textwidth}
        \centering
        \includegraphics[
            width=0.95\textwidth,
            alt={Poincare disk of the Farey tessellation with two chains of shaded triangles. Four shaded Farey triangles form a path from one over one to five eighths; a reflected companion ends at minus three fifths. Details in the caption.}
        ]{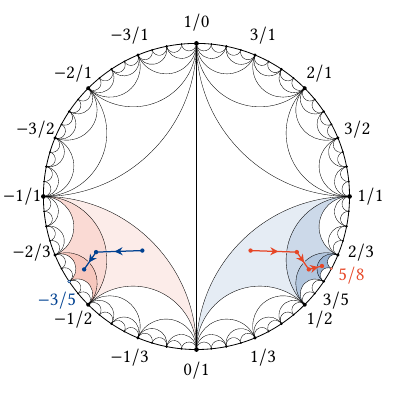}
        \caption{Farey path in the Poincar\'e disk.}
        \label{fig:farey-path-disk}
    \end{subfigure}
    \hfill
    \begin{subfigure}[b]{0.54\textwidth}
        \centering
        \includegraphics[
            width=0.95\textwidth,
            alt={Farey tree diagram of the Euclidean traversal from the base triangle to five eighths, with three-edge dual path through four shaded Farey triangles. Details in caption.}
        ]{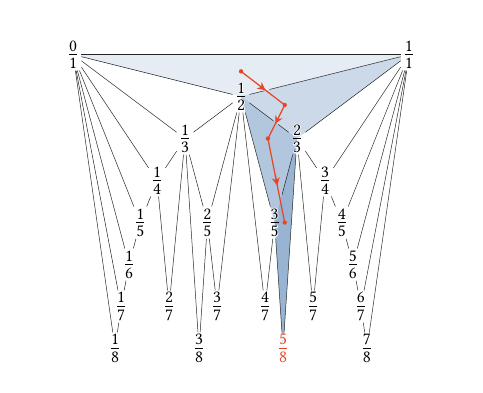}
        \caption{Euclidean traversal in the Farey tree.}
        \label{fig:farey-path-tree}
    \end{subfigure}
    
    \caption{The Farey path to $5/8$ encoding the Euclidean construction of $\lst(3,5)$. (\ref{sub@fig:farey-path-disk}) Four adjacent shaded triangles forming a path ending at $5/8$ in the Poincar\'e disk. The left path is the reflected companion ending at $-3/5$. (\ref{sub@fig:farey-path-tree}) The three-edge dual path records the same four triangle states.}
    \label{fig:farey-path-pq}
\end{figure}

Altogether,~\Cref{def:layering-step}, ~\Cref{thm:farey,thm:euclidean-alg} give three complementary descriptions of the same construction, from a topological, combinatorial, and arithmetic point of view, respectively. This understanding becomes especially useful when we pass from solid tori to closed manifolds obtained from gluing two layered solid tori together. Necessarily, such a closed manifold must admit a genus-$1$ Heegaard splitting, and hence is a lens space, $\stwo \times \sone$, or $\sthree$. The precise slope data accumulated during the layering process determines the corresponding gluing matrix and hence the exact topological type. Layered triangulations of solid tori therefore furnish a convenient entry point to triangulations of lens spaces.

\subsection{Three-sphere triangulations}

We now specialise the discussion to the case where two layered solid tori glue together to form a triangulation of the three-sphere. This happens precisely when the meridional curves of the two layered solid tori intersect on the common boundary torus in a single point. This condition is captured by a simple determinant criterion, see, for instance \cite[Section 1.5]{Saveliev2011}.

\begin{lemma}
    \label{lem:layered3sphere}
    The closed $3$-manifold obtained by gluing $\lst(p, q)$ and $\lst(r, s)$ along their boundary triangulations, such that the $p$-edge is glued to the $r$-edge, the $q$-edge to the $s$-edge, and the $(p+q)$-edge to $(r+s)$-edge, is homeomorphic to $\sthree$ if and only if $|p\,s - q\,r| = 1$.
\end{lemma}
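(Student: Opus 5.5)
The glued manifold is the union of two solid tori along their common boundary torus, so it has a genus-$1$ Heegaard splitting. Its topological type is therefore determined by how the two meridian slopes sit relative to each other on that torus; the triangulations inside the solid tori are irrelevant. The plan is to work in $H_1$ of the common boundary torus $T$, in a basis adapted to the boundary edges, and to compute the algebraic intersection number of the two meridians.

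\emph{Setting up a basis.} Orient the boundary edges of $\lst(p,q)$ so that the $p$-edge $x$ and the $q$-edge $y$ form a basis of $H_1(T;\ZZ)$ and the third edge represents $x+y$. This is possible because the three edges of a one-vertex torus triangulation are pairwise adjacent Farey slopes, one of which is the sum of the other two (\Cref{thm:farey}). The meridian $\mu_1$ of $\lst(p,q)$ meets the edges $p$, $q$ and $p+q$ times, and these counts are realised as algebraic intersection numbers with consistent signs (since $p+q$, rather than $|p-q|$, is the third count). Hence, up to sign,
\[
[\mu_1]=q\,x-p\,y,
\]
because $\langle \mu_1,x\rangle=\pm p$, $\langle \mu_1,y\rangle=\pm q$, and $\langle \mu_1,x+y\rangle=\pm(p+q)$ all hold for this class. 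The gluing identifies the $r$-edge with $x$, the $s$-edge with $y$ and the $(r+s)$-edge with $x+y$, with orientations matched. The same argument then gives $[\mu_2]=s\,x-r\,y$, up to sign.

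\emph{Computing the intersection number.} The algebraic intersection number of the two meridians is the determinant
\[
\mu_1\cdot\mu_2=\det\begin{pmatrix} q & -p\\ s & -r\end{pmatrix}=ps-qr,
\]
up to sign. By the standard genus-one classification (\Cref{def:lens-space}, \cite[Section 1.5]{Saveliev2011}), a union of two solid tori along their boundary whose meridians have algebraic intersection number $n$ satisfies $H_1\cong\ZZ/|n|$. Moreover, it is $L(|n|,\cdot)$ when $n\neq0$ and $\stwo\times\sone$ when $n=0$.

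\emph{The two directions.} If $|ps-qr|=1$, the meridians are adjacent Farey slopes. A homeomorphism of $T$ then carries $\mu_2$ to a longitude of the first solid torus, so the manifold is $\sthree$. Conversely, if the manifold is $\sthree$, then $H_1=0$, which forces $|ps-qr|=1$. This uses \Cref{thm:lens-class}: $L(\lensp,\lensq)$ with $\lensp>1$ has nontrivial $H_1$, and $\stwo\times\sone$ has $H_1=\ZZ$.

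\emph{Main obstacle.} The delicate step is the sign bookkeeping, namely justifying that the geometric intersection counts $p,q,p+q$ yield $[\mu_1]=\pm(qx-py)$ with a consistent choice of edge orientations. It must also be checked that the gluing respects these orientations, since one edge might be glued to another reversed. I would handle this by choosing orientations so that the edge $x+y$ is literally the sum of $x$ and $y$ on both sides. The gluing is a simplicial map of the two-triangle torus sending this triple to the corresponding triple, so it preserves the relation ``third edge $=$ sum''. It therefore acts on $H_1$ by $x\mapsto \pm x$, $y\mapsto\pm y$ with the same sign, and that common sign does not affect $|\det|$.
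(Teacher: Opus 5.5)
Your proposal is correct and follows the same route as the paper's sketch. Both identify the two meridian slopes on the common Heegaard torus, compute their intersection number as $|ps-qr|$, and use that a genus-one splitting gives $\sthree$ exactly when the meridians meet once; your explicit classes $\pm(qx-py)$, $\pm(sx-ry)$ and the computation $H_1\cong\ZZ/|ps-qr|$ make precise what the paper only asserts (notably the converse direction), and the orientation issue you flag is harmless because only the absolute intersection counts of $\mu_2$ with the three edges of the common torus enter.
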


\begin{proof}[Proof sketch]
In layered solid tori $\lst(p,q)$ and $\lst(r,s)$, the respective meridians have slopes $p/q$ and $r/s$ on their common boundary torus. The quantity $|p\,s - q\,r|$ is precisely the geometric intersection number of these slopes.  Thus the condition $|p\,s - q\,r| = 1$ says that the two meridians intersect once, and hence form a basis for the first homology of the boundary torus.

Topologically, gluing two solid tori so that their meridians intersect once gives the standard genus-$1$ Heegaard splitting of $\sthree$. Conversely, if the union of $\lst(p,q)$ and $\lst(r,s)$ is $\sthree$, then the two meridional slopes must intersect once on the common boundary torus, so the determinant condition follows. 
\end{proof}

In what follows, we refer to the triangulations of the three-sphere from \Cref{lem:layered3sphere} as {\em layered triangulations of  $\sthree$}. Note that this definition excludes triangulations of  $\sthree$ having a layered structure containing a snapped ball, since we require our layered solid tori to each contain a core M\"obius band.

\section{Torus knots in three-sphere triangulations}
\label{sec:torus-knots}

We start with the following straightforward observation.

\begin{proposition}
\label{prop:all-edges-torus-knots}
Let $\tri$ be a layered triangulation of $\sthree$, and let $e \in \tri^{(1)}$ be an edge. Then $e$ represents a torus knot in $\sthree$.
\end{proposition}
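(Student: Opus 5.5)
The plan is to show that every edge of $\tri$ can be isotoped onto the Heegaard torus $\partial \lst(p,q)$ as an essential simple closed curve, or else is trivially knotted. Write $\tri = \lst(p,q) \cup \lst(r,s)$ as in \Cref{lem:layered3sphere}, with common boundary torus $T$. This $T$ is the standard genus-$1$ Heegaard torus of $\sthree$. Since $\tri$ has one vertex, every edge $e$ is a closed loop, i.e.\ a knot.

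The key structural fact I would use is that each edge of a layered solid torus appears, at some stage of the layering, as a boundary edge. Indeed, $\lst(p,q)$ is built from the M\"obius band core by successive layering steps (\Cref{def:layering-step}). Each interior edge $e$ was a boundary edge of some intermediate layered solid torus $\tri_k \subset \lst(p,q)$ before being covered. The edges of the core M\"obius band are themselves boundary edges of the one-tetrahedron layered solid torus. So every edge lies on the boundary torus $T_k = \partial \tri_k$ of a solid torus $\tri_k$ contained in $\lst(p,q)$.

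The next step is to identify $T_k$ with $T$ up to isotopy in $\sthree$. The region $\lst(p,q) \setminus \tri_k$ is a union of layered tetrahedra, and it is homeomorphic to $T^2 \times [0,1]$. This follows since each layering step attaches a tetrahedron along two faces, which is a product cobordism between the old and new boundary tori, as the paper notes when it says the underlying manifold is preserved. Hence $T_k$ is parallel to $T$ and $e$ is isotopic to a simple closed curve on $T$. Since $e$ is an edge of a one-vertex torus triangulation, it is a simple closed curve through the vertex and is essential, with some slope. An essential simple closed curve on the standard Heegaard torus of $\sthree$ is by definition a torus knot $T(a,b)$, with the convention that the slopes meeting one of the meridians once give the unknot. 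The same argument applies to edges of $\lst(r,s)$, and the edges on $T$ itself are immediate.

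The main obstacle is rigorously justifying the product structure $T_k \times [0,1]$ and the claim that a layered edge, viewed on an intermediate boundary, is an embedded essential curve whose slope is read off from the Farey path. I would handle this by induction on the number of layering steps, using the Farey description of \Cref{thm:farey}. Each step replaces one slope in the Farey triangle by the adjacent one, and the new tetrahedron gives a collar between the two boundary tori, carried out through the quadrilateral flip in \Cref{fig:layering-step}.
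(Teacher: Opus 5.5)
Your overall strategy is the paper's: every edge is a boundary edge of some intermediate layered solid torus, and that intermediate torus is a Heegaard torus of $\sthree$.

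\textbf{The gap.} Your opening claim, that the common boundary torus $T$ of $\lst(p,q)\cup\lst(r,s)$ is the standard Heegaard torus, is false whenever one piece is the $0$-tetrahedron M\"obius band $\lst(1,1)$. This happens, for instance, for every $\lst(1,1)\cup\lst(k,k+1)$, which are the canonical triangulations of $T(2,2k+1)$. There, ``gluing along the boundary'' folds the two boundary triangles of the other piece onto each other. So $T$ is not an embedded torus: it collapses onto a M\"obius band, and there is no torus to isotope $T_k$ onto. Your remark that the edges on $T$ are ``immediate'' is also unjustified in this case, since the M\"obius band's boundary edge is precisely the knotted edge $T(2,2k+1)$.

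\textbf{Repair with at least two tetrahedra.} Here you can argue as the paper does. Shift the splitting surface so that both sides contain at least one tetrahedron. Then every edge lies on an interface torus that is genuinely embedded. Moreover, the complement of each $\tri_k$ is itself a layered solid torus: read the remaining layers as layering steps onto the other core. Hence $T_k$ is directly a Heegaard torus.

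\textbf{The one-tetrahedron case.} For $\lst(1,1)\cup\lst(1,2)$ this repair is impossible. The triangulation has only two edges and two triangles, so it contains no torus subcomplex at all. You must check by hand that its two edges are the unknot and the trefoil. This is exactly the separate case in the paper's proof, and your argument does not cover it.

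\textbf{A smaller point.} The region between $T_k$ and $T$ is not literally $T^2\times[0,1]$. Consecutive boundary tori in a layering share the vertex and two of their three edges, so a product structure only appears after pushing $T_k$ slightly into $\tri_k$. The paper's formulation, that each intermediate embedded torus bounds solid tori on both sides, avoids needing the collar and the isotopy to $T$ altogether. That makes the inductive product argument you flagged as the main obstacle unnecessary.
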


\begin{proof}
Since $\tri$ is a one-vertex triangulation of $\sthree$, every edge $e \in \tri^{(1)}$ is a simple closed curve, and hence a knot in $\sthree$. Moreover, if $\tri$ has at least two tetrahedra, $e$ is a boundary edge for some decomposition of $\tri$ into two layered solid tori. To see this, note that in the base solid torus all edges lie on its torus boundary, and each layering step introduces exactly $1$ new edge which is a boundary edge when created. At every stage the boundary torus $T \subset \sthree$ separates $\sthree$ into two solid tori (each with at least $1$ tetrahedron), forming a genus-$1$ Heegaard splitting (in particular, each of the solid tori is unknotted in $\sthree$). Altogether, $e$ is a non-trivial closed curve on an unknotted torus $T \subset \sthree$, and hence a torus knot. This leaves the $1$-tetrahedron triangulation of $\sthree$ that is a decomposition into a $1$-tetrahedron layered solid torus, and a $0$-tetrahedron M\"obius band. In this case we can verify that the two edges of this triangulation form the trivial knot and the trefoil (both are torus knots). 
\end{proof}

The following proposition identifies a torus knot of a specific type in our layered $\sthree$-triangulations.

\begin{proposition}
\label{prop:torus-knot-edge}
Layered triangulation $\tri = \lst(p,q) \, \cup \, \lst(r,s)$ of $\sthree$ contains an edge $e \in \tri^{(1)}$ representing the torus knot $T(p+q,\,r+s)$.
\end{proposition}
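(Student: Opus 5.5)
The edge $e$ will be the common boundary edge obtained by gluing the $(p+q)$-edge of $\lst(p,q)$ to the $(r+s)$-edge of $\lst(r,s)$. This edge lies on the Heegaard torus $T$ of the splitting $\tri = \lst(p,q) \cup \lst(r,s)$, so by the argument in \Cref{prop:all-edges-torus-knots} it is a torus knot. It remains to determine its type, that is, how many times it winds around each of the two complementary solid tori.

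First we compute algebraic intersection numbers on $T$, using the slope description from \Cref{thm:farey}. On $\partial\lst(p,q)$ the meridian disk meets the three boundary edges $p$, $q$ and $p+q$ times. Since $\lst(p,q)$ is built by layering from the M\"obius base, its boundary edges are minimally positioned with respect to the meridian, so these are geometric intersection numbers. In particular $e$ meets the meridian $\mu_1$ of $\lst(p,q)$ in $p+q$ points. Symmetrically, $e$ meets the meridian $\mu_2$ of $\lst(r,s)$ in $r+s$ points.

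Next we identify the knot type. By \Cref{lem:layered3sphere}, $|ps-qr|=1$, so $\mu_1$ and $\mu_2$ meet once and form a basis of $H_1(T;\ZZ)$. On the standard Heegaard torus of $\sthree$, $\mu_1$ and $\mu_2$ serve as meridian and longitude of the unknotted torus. A simple closed curve in class $a[\mu_1]+b[\mu_2]$ is the torus knot $T(a,b)$, up to order and mirror. With respect to this basis, the coefficients of $[e]$ are determined up to sign by intersection numbers: $|[e]\cdot[\mu_2]| = |a|$ and $|[e]\cdot[\mu_1]| = |b|$. So we need the \emph{algebraic} intersection numbers $[e]\cdot[\mu_1]$ and $[e]\cdot[\mu_2]$ to have absolute values $p+q$ and $r+s$. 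This gives $e$ in class $\pm(r+s)[\mu_1]\pm(p+q)[\mu_2]$, hence $e$ is $T(p+q,r+s)$ up to mirror image, which we regard as the same torus knot in this setting.

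The main obstacle is showing that geometric and algebraic intersection agree, i.e.\ that all $p+q$ crossings of $e$ with $\mu_1$ have the same sign. The plan is to write the boundary edge slopes in the basis of the original M\"obius band torus. There the $p$-, $q$-, and $(p+q)$-edges are primitive classes $x$, $y$, $x+y$ with $x\cdot\mu_1$ and $y\cdot\mu_1$ of the same sign. This holds because the layering along the Euclidean algorithm (\Cref{thm:euclidean-alg}) only ever forms sums of nonnegative multiples, staying inside one Farey branch, as in \Cref{fig:farey-path-pq}. Hence $(x+y)\cdot\mu_1 = \pm(p+q)$. The same argument applies to the $(r+s)$-edge and $\mu_2$. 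An alternative is to note that the edge labelled $p+q$ is by construction the longest one, which sits opposite the other two in the Farey triangle and so is their sum rather than their difference. Either way, this sign bookkeeping is the step to check carefully.
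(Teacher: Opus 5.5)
Your proposal is correct and is essentially the paper's proof, written out with more homological detail. Both arguments run the same way: the two meridians meet once and so form a basis of $H_1(T;\ZZ)$, and the counts $p+q$ and $r+s$ of the glued edge against these meridians then determine its class, giving $T(p+q,r+s)$ up to mirror image, which the paper also leaves implicit. The sign issue you flag as the main obstacle is in fact automatic: for slopes on a torus the geometric intersection number equals the absolute value of the algebraic one, so once the counts $p,q,p+q$ are geometric (which you assert, and the paper takes as holding by construction), all crossings must have the same sign.
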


\begin{proof}
Let $T \subset \tri$ be the one-vertex torus at the interface of $\lst(p,q)$ and $\lst(r,s)$. By construction of $\tri$, the meridian disks of $\lst(p,q)$ and $\lst(r,s)$ intersect once in $\tri$. Also, by construction, the meridian disks of $\lst(p,q)$ and $\lst(r,s)$ intersect the three edges of the common boundary torus $p$, $q$, and $p+q$ times, and $r$, $s$, and $r+s$ times, respectively. It follows that the three edges of $T$ must form torus knots of type $T(p,\,r)$, $T(q,\,s)$, and $T(p+q,\,r+s)$, respectively. 
\end{proof}

\Cref{prop:all-edges-torus-knots,prop:torus-knot-edge} together show that layered $\sthree$ triangulations realise torus knots of prescribed types.
Now we establish the converse, that is, every torus knot arises in a canonical way from such a layered construction.

\begin{proposition}
\label{prop:decomposition}
Let $\pp,\qq \ge 2$ with $\gcd(\pp,\qq)=1$, and consider the torus knot $T(\pp,\qq)$. There exists a unique basis $\{(p,r),(q,s)\}$ of $H_1(T^2;\ZZ)\cong\ZZ^2$ with $p,q,r,s>0$ and
\[
(p,r)+(q,s)=(\pp,\qq),
\]
up to swapping $(p,q,r,s)\leftrightarrow(q,p,s,r)$. In particular, $\pp=p+q$, $\qq=r+s$, and $|p\,s-q\,r|=1$.
\end{proposition}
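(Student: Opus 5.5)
This is the classical statement that a primitive lattice vector with positive coordinates has a unique decomposition into two positive lattice vectors forming a basis; equivalently, every fraction has a unique pair of Farey parents. I would prove it directly in linear algebra over $\ZZ$, using \Cref{thm:farey} only for the geometric interpretation.

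\textbf{Reduction to a congruence.} Write $v=(\pp,\qq)$ and seek $u=(p,r)$ with $u$ and $v-u=(q,s)$ forming a basis. Since $\det(u,v-u)=\det(u,v)=p\qq-r\pp$, the basis condition is $p\qq - r\pp = \pm 1$, which is exactly $|p\,s-q\,r|=1$ after expanding $q=\pp-p$, $s=\qq-r$. Positivity of all four entries means $0<p<\pp$ and $0<r<\qq$.

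\textbf{Existence.} Because $\gcd(\pp,\qq)=1$, there is a unique residue $p_+\in\{1,\dots,\pp-1\}$ with $p_+\qq\equiv 1\pmod{\pp}$. It is nonzero since $\pp\ge 2$. Set $r_+=(p_+\qq-1)/\pp$. Then $0\le r_+<\qq$, and $r_+>0$ because $p_+\qq>1$ when $\qq\ge2$. Similarly, $p_-=\pp-p_+$ solves $p_-\qq\equiv-1$, with $r_-=\qq-r_+$. Note that $(p_-,r_-)=v-(p_+,r_+)$, so the $-1$ solution is exactly the swapped version of the $+1$ solution.

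\textbf{Uniqueness.} This is the only real obstacle. Suppose $(p,r)$ is a solution with sign $+1$. Then $p\qq\equiv1\pmod{\pp}$ and $0<p<\pp$, so $p=p_+$, and $r$ is then forced. A solution with sign $-1$ is likewise forced to be $(p_-,r_-)$, which is the swap of the first.

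I must also check that the two solutions are genuinely the unordered pair $\{u,v-u\}$ and not two different unordered pairs. This holds because $\det(v-u,v)=-\det(u,v)$: swapping exchanges the sign, so each unordered basis appears exactly once in each sign class.

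\textbf{Conclusion.} The identities $\pp=p+q$, $\qq=r+s$, and $|ps-qr|=1$ follow immediately from the construction. In Farey terms, $(p,r)$ and $(q,s)$ are the two Farey parents of $\qq/\pp$, which are the endpoints of the unique Farey edge whose mediant is $\qq/\pp$.
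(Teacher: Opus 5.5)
Your proof is correct and follows the paper's argument: both reduce the basis condition to $|p\,\qq-\pp\,r|=1$, get existence from the invertibility of $\qq$ modulo $\pp$ (B\'ezout), and get uniqueness by reducing modulo $\pp$ under the constraint $1\le p\le \pp-1$, with the solutions $p$ and $\pp-p$ (one for each sign) exchanged by the swap. Your version is slightly more careful than the paper's: you explicitly verify $1\le r\le \qq-1$ using $\qq\ge 2$, note that $r$ is forced once $p$ is fixed, and check that the $-1$ solution is exactly $v-u$.
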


\begin{proof}
We need integers $p,q,r,s > 0$ such that $p+q=\pp$, $r+s=\qq$, and $|p\,s-q\,r|=1$. Setting $q = \pp - p$ and $s = \qq - r$, the determinant condition
\[
    |p\,s - q\,r|
    \;=\; |p(\qq - r) - (\pp - p)\,r|
    \;=\; |p\,\qq - \pp\,r|
\]
reduces to
\[
    |p\,\qq - \pp\,r| = 1.
\tag{\hellblau{$\ast$}}\label{eq:bezout-pq}
\]

\begin{itemize}
    \item \emph{Existence.} Since $\gcd(\pp,\qq)=1$, B\'ezout's identity furnishes integers $p, r$ satisfying~\Cref{eq:bezout-pq}.  
    
    Reducing $p$ modulo $\pp$ to a representative in $\{1, \dots, \pp - 1\}$ forces $r \in \{1, \dots, \qq - 1\}$, and hence $q = \pp - p$, $s = \qq - r$ are positive.
    \item \emph{Uniqueness.} Reducing~\Cref{eq:bezout-pq} modulo $\pp$ gives
    \[
        p\,\qq \equiv \pm 1 \pmod{\pp}.
    \] 
    The positivity constraint $1 \le p \le \pp - 1$ determines $p$ uniquely as an integer, and the two residues $p$ and $\pp - p$ correspond exactly to the swap $(p,q,r,s) \leftrightarrow (q,p,s,r)$.
\end{itemize}
\end{proof}

\begin{corollary}
\label{cor:canonical-triangulation}
Every non-trivial torus knot $T(\pp,\qq)$ is realised by an edge in a canonical layered one-vertex triangulation $\tri$ of $\sthree$ of the form $\lst(p,q) \, \cup \, \lst(r,s)$, where $\pp=p+q$ and $\qq=r+s$.
\end{corollary}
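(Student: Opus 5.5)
The plan is to combine \Cref{prop:decomposition}, \Cref{lem:layered3sphere} and \Cref{prop:torus-knot-edge}. I take a non-trivial torus knot $T(\pp,\qq)$, so $\gcd(\pp,\qq)=1$ and, after discarding the unknot cases, $\pp,\qq\ge 2$. \Cref{prop:decomposition} then gives positive integers $p,q,r,s$ with $p+q=\pp$, $r+s=\qq$ and $|p\,s-q\,r|=1$. These are unique up to the swap $(p,q,r,s)\leftrightarrow(q,p,s,r)$. Because $|ps-qr|=1$, any common divisor of $p$ and $q$ divides $1$, so $\gcd(p,q)=1$; likewise $\gcd(r,s)=1$. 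The minimal layered solid tori $\lst(p,q)$ and $\lst(r,s)$ are therefore defined, where I order each pair so that the smaller entry comes first, as the definition of $\lst$ requires. Each is determined canonically by \Cref{thm:euclidean-alg}.

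Next I glue the two solid tori along their one-vertex boundary tori. The $p$-edge goes to the $r$-edge, the $q$-edge to the $s$-edge, and the $(p+q)$-edge to the $(r+s)$-edge. By \Cref{lem:layered3sphere}, the determinant condition makes the result a one-vertex triangulation $\tri$ of $\sthree$. \Cref{prop:torus-knot-edge} then says that the diagonal edge, which the meridian disks cross $p+q=\pp$ and $r+s=\qq$ times respectively, represents $T(\pp,\qq)$.

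The step that needs care is showing that this construction is canonical. The only ambiguity in \Cref{prop:decomposition} is the swap. Under the swap, $\lst(p,q)$ and $\lst(q,p)$ name the same minimal layered solid torus with its two short boundary edges relabelled, and likewise for $\lst(r,s)$ and $\lst(s,r)$. The edge matching ($p\leftrightarrow r$, $q\leftrightarrow s$) is carried to itself, so both choices give the same triangulation up to combinatorial isomorphism.

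One more point must be checked. Ordering each pair by size for the $\lst$ notation must not break the matching: if $p<q$ but $r>s$, the gluing still pairs the edges by intersection number, not by position. I will note that \Cref{lem:layered3sphere} is stated in terms of the labelled edges, so it applies regardless of the order. I also have to make sure that the $\lst$ of a pair such as $(1,1)$, and small pairs involving $1$, fit the base-case conventions of the paper, so that degenerate cases like $T(2,3)$ are covered.
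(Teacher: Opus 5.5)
Your proposal is correct and follows the paper's route. The paper states the corollary as an immediate consequence of \Cref{prop:decomposition}, \Cref{lem:layered3sphere} and \Cref{prop:torus-knot-edge}, with $\lst(1,1)$ understood as the $0$-tetrahedron M\"obius band so that the $T(2,2k+1)$ cases are covered. Your extra checks (coprimality of $(p,q)$ and $(r,s)$ from $|ps-qr|=1$, and invariance under the swap) are details the paper leaves implicit. The mixed-ordering case you worry about cannot occur, since $p<q$ and $r>s$ would give $qr-ps\ge (p+1)(s+1)-ps\ge 3$.
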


The size of $\tri$ in the construction of~\Cref{cor:canonical-triangulation} equals the total number of layering steps to build $\lst(p,q)$ and $\lst(r,s)$, which is equal to the number of steps in the subtraction-form Euclidean algorithm performed on both $(p,q)$ and $(r,s)$. This yields an upper bound on the triangulation complexity of $T(\pp,\qq)$ that is at most linear in $\max\{\pp,\qq\}$, and as low as $O(\log (\max\{\pp,\qq\}))$ for certain inputs.

In particular, we obtain at least $3{,}049$ torus knots with a triangulation complexity of up to $19$, and we conjecture this number to be exact (see Conjecture~\ref{conj:sharp}). In contrast, there are only $14$ torus knots in the census of knots up to crossing number $19$ among more than $350$ million hyperbolic~\cite{Burton2020}.

\begin{example}
\label{ex:canonical-triangulations}
We present three families of examples of the canonical triangulation of \Cref{cor:canonical-triangulation}.
    
\begin{itemize}
    \item  \emph{Linear family: $T(2,2k+1)$ with $(p,q,r,s)=(1,1,k,k+1)$.}
    The decomposition of \Cref{prop:decomposition} produces the layered triangulations:

    \begin{center}
        \begin{minipage}{\linewidth}
            \captionsetup{type=table,width=\linewidth}
            \centering
            \begingroup
            \small
            \begin{tabular}{@{}llrrr@{}}
                \toprule
                \textbf{Torus knot}
                    & \textbf{Layered decomposition}
                    & \textbf{Tetrahedra}
                    & $\boldsymbol{\max\{\pp,\qq\}}$
                    & $\boldsymbol{c(K)}$ \\
                \midrule
                $T(2,3)$
                    & $\lst(1,1)\cup\lst(1,2)$
                    & $1$
                    & $3$
                    & $3$ \\
                $T(2,5)$
                    & $\lst(1,1)\cup\lst(2,3)$
                    & $2$
                    & $5$
                    & $5$ \\
                $T(2,7)$
                    & $\lst(1,1)\cup\lst(3,4)$
                    & $3$
                    & $7$
                    & $7$ \\
                \addlinespace[0.35em]
                \midrule
                $T(2,2k+1)$
                    & $\lst(1,1)\cup\lst(k,k+1)$
                    & $k$
                    & $2k+1$
                    & $2k+1$ \\
                \bottomrule
            \end{tabular}

            \endgroup
            \caption{Layered triangulations for the family $T(2,2k+1)$.}
            \label{tab:canonical-linear}
        \end{minipage}
    \end{center}

    The subtraction-form Euclidean algorithm on $(k,k+1)$ needs exactly $k$ steps to descend to $(1,1)$, so $|\lst(k,k+1)|=k$, and the canonical triangulation of $T(2,2k+1)$ contains $k$ tetrahedra.  This is linear in $\max\{\pp,\qq\}=2k+1$ and saturates the worst-case linear growth rate of the construction.

    \item \emph{Logarithmic family: $T(F_n,F_{n+1})$ for even $n \ge 4$, where $F_n$ is the $n$-th Fibonacci number.} Cassini's identity $F_{n-1} F_{n+1} - F_n^2 = 1$~\cite[Eq.~(6.103)]{GKP1994} furnishes the unique decomposition $(p, q, r, s)=(F_{n-1}, F_{n-2}, F_n, F_{n-1})$ of \Cref{prop:decomposition}, yielding the family:

    \begin{center}
        \begin{minipage}{\linewidth}
            \captionsetup{type=table,width=\linewidth}
            \centering
            \begingroup
            \small
            \begin{tabular}{@{}llrrr@{}}
                \toprule
                \textbf{Torus knot}
                    & \textbf{Layered decomposition}
                    & \textbf{Tetrahedra}
                    & $\boldsymbol{\max\{\pp,\qq\}}$
                    & $\boldsymbol{c(K)}$ \\
                \midrule
                $T(3,5)$
                    & $\lst(1,2)\cup\lst(2,3)$
                    & $3$
                    & $5$
                    & $10$ \\
                $T(8,13)$
                    & $\lst(3,5)\cup\lst(5,8)$
                    & $7$
                    & $13$
                    & $91$ \\
                $T(21,34)$
                    & $\lst(8,13)\cup\lst(13,21)$
                    & $11$
                    & $34$
                    & $680$ \\
                $T(55,89)$
                    & $\lst(21,34)\cup\lst(34,55)$
                    & $15$
                    & $89$
                    & $4{,}806$ \\
                $T(144,233)$
                    & $\lst(55,89)\cup\lst(89,144)$
                    & $19$
                    & $233$
                    & $33{,}319$ \\
                \bottomrule
            \end{tabular}

            \endgroup
            \caption{Layered triangulations for torus knots arising from consecutive Fibonacci numbers.}
            \label{tab:canonical-fibonacci}
        \end{minipage}
    \end{center}

    Across consecutive even values of $n$, the tetrahedron count grows by exactly $4$, while $c(K)$ grows exponentially, with consecutive ratios approaching $\phi^{4} \approx 6.854$, where $\phi = (1+\sqrt{5})/2$ is the golden ratio. The Euclidean algorithm therefore runs at its theoretical worst rate~\cite{Lame1844}, but on knots whose crossing number $c(K) = F_{n+1} (F_n - 1)$ is exponential in $n$~\cite[Proposition~7.5]{Murasugi1991}, so the canonical triangulations have size $O(\log\max\{\pp,\qq\}) = O(\log c(K))$.

    \item \emph{Trivial case $\pp=1$.}  The hypothesis $\pp,\qq \ge 2$ in \Cref{prop:decomposition} does not cover this case, and indeed the condition $p,q>0$, $2 \leq p+q=\pp$, causes the unknot $T(1,\qq)$ to be not realised as the central edge $T(p+q,r+s)$ of any canonical triangulation.  It is, however, present among the side edges produced by \Cref{prop:torus-knot-edge}, namely $T(p,r)$ and $T(q,s)$, each of which represents the unknot precisely when one of its two parameters equals $1$.  
      
    In the smallest instance $\lst(1,1)\cup\lst(1,2)$, both side edges are represented by the same \emph{unknotted} edge, so the two edges of this one-tetrahedron triangulation of $\sthree$ realise the unknot and the trefoil, respectively.
    \end{itemize}

    The two extreme families and the resulting difference between crossing-number and canonical-size cutoffs are compared in \Cref{fig:complexity-scatter}.

    \begin{figure}[t!]
        \centering
        \begin{subfigure}[b]{0.49\linewidth}
            \centering
            \includegraphics[
                width=0.98\linewidth,
                height=0.50\linewidth,
                keepaspectratio,
                alt={Log-axis canonical size versus crossing number for the linear and Fibonacci families, shown with circles and triangles. Details in caption.}
            ]{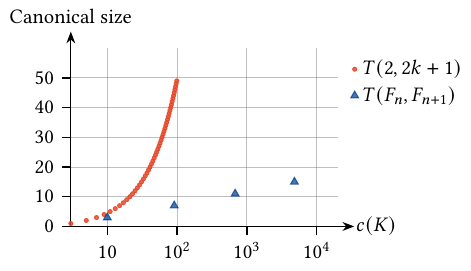}
            \caption{Canonical triangulation size by crossing number.}
            \label{fig:complexity-crossing}
        \end{subfigure}
        \hfill
        \begin{subfigure}[b]{0.49\linewidth}
            \centering
            \includegraphics[
                width=0.98\linewidth,
                height=0.50\linewidth,
                keepaspectratio,
                alt={Log cutoff counts through nineteen; squares and triangles compare canonical size and crossings. Shading marks 83 certified types. Details in caption.}
            ]{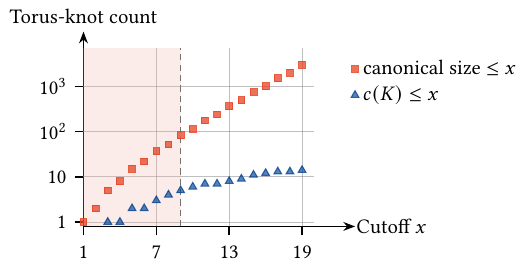}
            \caption{Total number of torus knots by cutoff. }
            \label{fig:complexity-counts}
        \end{subfigure}
        \caption{Canonical triangulation size and cutoff counts. (\ref{sub@fig:complexity-crossing}) The $49$ knots $T(2,2k+1)$ and 4 even-index Fibonacci knots with $2 \le \pp < \qq \le 100$, plotted against crossing number on a logarithmic axis. (\ref{sub@fig:complexity-counts}) A canonical-size cutoff counts far more torus knots than the same \emph{exact} crossing-number cutoff (at $x=19$ the counts are $3{,}049$ and $14$, respectively). Results in the shaded region are verified for size.}
        \label{fig:complexity-scatter}
    \end{figure}
\end{example}

\begin{corollary}
\label{cor:log-complexity}
There exist infinitely many knots with triangulation complexity at most a logarithmic function in their crossing number.
\end{corollary}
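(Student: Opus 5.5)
We take the Fibonacci family from \Cref{ex:canonical-triangulations}, the torus knots $K_n = T(F_n,F_{n+1})$ for even $n \ge 4$, and compare two quantities: an upper bound for the size of the canonical triangulation of $K_n$, and the crossing number $c(K_n)$. These knots are pairwise distinct, since torus knots are classified by their coprime parameter pair and $F_{n+1}$ increases strictly with $n$. So the family is infinite, and it suffices to show that the triangulation complexity of $K_n$ is $O(\log c(K_n))$.

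\textbf{Upper bound on complexity.} Cassini's identity gives $F_{n-1}F_{n-1} - F_{n-2}F_n = (-1)^n = 1$ for even $n$. Hence $(p,q,r,s) = (F_{n-1},F_{n-2},F_n,F_{n-1})$ satisfies $p+q=F_n$, $r+s=F_{n+1}$ and $|ps-qr|=1$. By \Cref{lem:layered3sphere}, $\lst(F_{n-2},F_{n-1}) \cup \lst(F_{n-1},F_n)$ is a layered triangulation of $\sthree$. By \Cref{prop:torus-knot-edge}, it contains an edge representing $T(F_n,F_{n+1}) = K_n$.

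Its size is the number of subtraction steps in the Euclidean algorithm on consecutive Fibonacci pairs. For $(F_{m-1},F_m)$, each subtraction maps it to $(F_{m-2},F_{m-1})$, so descending to the base pair $(1,1)=(F_1,F_2)$ takes $m-2$ steps. The total is therefore $(n-3)+(n-2) = 2n-5$ tetrahedra. For $n=4$ this gives $3$, matching \Cref{tab:canonical-fibonacci}. The triangulation complexity of $K_n$ is thus at most $2n-5$.

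\textbf{Lower bound on crossing number.} For a torus knot with $2\le \pp<\qq$, the crossing number is $c(T(\pp,\qq)) = \qq(\pp-1)$ (Murasugi). Hence $c(K_n) = F_{n+1}(F_n - 1) \ge F_n \ge \phi^{n-2}$ for $n\ge 4$. This gives $n \le 2 + \log_\phi c(K_n)$, and so the triangulation complexity of $K_n$ is at most $2n - 5 \le 2\log_\phi c(K_n) - 1$, which is logarithmic in the crossing number.

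The main point needing care is the crossing-number formula. Without it we only have a bound on $\max\{\pp,\qq\}$, and the step count alone does not control $c(K)$. This is a classical result and we simply cite it. The step count along Fibonacci pairs is routine.
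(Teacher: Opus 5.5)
Your proof is correct and takes the same route as the paper: the Fibonacci family $T(F_n,F_{n+1})$, Cassini's identity for the decomposition, the subtraction-step count for the triangulation size, and Murasugi's formula $c(K_n)=F_{n+1}(F_n-1)$. Where the paper only says the result ``follows from the discussion above,'' you make it explicit, with the exact count of $2n-5$ tetrahedra and the bound $2n-5 \le 2\log_\phi c(K_n) - 1$.
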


\begin{proof}
Such an infinite family is given by $T(F_n, F_{n+1})$ for even $n \ge 4$, where $F_n$ denotes the $n$-th Fibonacci number. The claimed properties follow from the discussion above. 
\end{proof}

\section{Open questions and further work} 

\subsection{Sharpness of the upper bound for torus knots}
\label{sec:open-sharpness}

The construction of \Cref{sec:layered-triangulations} produces a triangulation of $\sthree$ of size
\[
\ell(p,q) + \ell(r,s) \;=\; \lvert\lst(p,q)\rvert + \lvert\lst(r,s)\rvert,
\]
where $(p,q,r,s)$ is the unique decomposition of~\Cref{prop:decomposition}.

\begin{conjecture}
\label{conj:sharp}
Let $\pp,\qq \ge 2$ with $\gcd(\pp,\qq)=1$. The triangulation complexity of $T(\pp,\qq)$ is realised by the canonical construction of \Cref{cor:canonical-triangulation}; that is, it equals $\ell(p,q)+\ell(r,s)$.
\end{conjecture}

A proof of \Cref{conj:sharp} would, via \Cref{cor:log-complexity}, give the first infinite family of knots for which the triangulation complexity is determined exactly and grows logarithmically in the crossing number. The natural lower-bound would follow from the minimality of layered solid tori conjectured by Jaco and Rubinstein~\cite[Conjecture, Section 4.2]{JacoRubinstein2006}, that if every one-vertex triangulation of $\sthree$ containing $T(\pp,\qq)$ as an edge splits along the corresponding boundary torus into two minimally layered solid tori, the upper bound of \Cref{sec:layered-triangulations} matches the lower bound.

\begin{question}
\label{q:splitting}
Does every one-vertex triangulation of $\sthree$ containing a non-trivial torus knot $T(\pp,\qq)$ as an edge admit a Heegaard torus as a subcomplex that decomposes it into two layered solid tori with the edge in the common boundary torus?
\end{question}

Note that, in the above question, a layered solid torus can be a degenerate $0$-tetrahedron M\"obius band. Otherwise, the $1$-tetrahedron triangulation of $\sthree$ containing a trefoil as an edge would already be a counterexample. 

A positive answer to \Cref{q:splitting}, combined with the
Jaco--Rubinstein minimality conjecture, implies \Cref{conj:sharp}.

\subsection{Cable knots}
\label{sec:open-cables}

Given a knot $K: \sone \hookrightarrow \sthree$, we can use a variant of our construction to produce cable knots of $K$. For this, start with a triangulation of the knot exterior of $K$ with boundary a one-vertex torus. By construction, at most one of the boundary edges runs along the boundary of the meridian disk. The other edges are cables of $K$. Applying layerings on the boundary we can introduce edges of every cable type, using the (inverse of the) subtraction version of the Euclidean algorithm. Once the prescribed cable knot is introduced as an edge, we can layer on the resulting boundary to eventually fold to realise the Dehn filling of the exterior to $\sthree$, see also \cite{JacoRubinsteinSpreerTillmann2025}.

\subsection{Satellites}
\label{sec:open-satellites}

In this section we briefly explain how satellite knots, as well, conveniently fit into the triangulation complexity setting.
For this, let $L = P \cup A \subset \sthree$ be a $2$-component link where $P$ describes a pattern and $A$ the boundary of the meridian of a companion knot $C$, called the axis component. We first form the $2$-cusped link exterior
\[
X_{P} = E(L) = \sthree \setminus \operatorname{int} \bigl(N(P) \cup N(A)\bigr).
\]
Since $A$ is unknotted, $\sthree \setminus \operatorname{int} N(A)$ is a solid torus, so $X_{P}$ may be regarded as the exterior of the pattern knot $P$ inside the solid torus following the companion knot $C$. Its two boundary tori are
\[
T_{P}=\partial N(P), \qquad T_{A}=\partial N(A).
\]
We choose a triangulation of a solid torus, where $P$ is realised as an edge (and hence $T_P$ only exist implicitly), and $T_{A}$ is represented as a one-vertex torus with edges along both the meridian $\mu_A$ and the homological longitude $\lambda_A$. We also take the companion exterior
\[
X_{C}=\sthree\setminus \operatorname{int}N(C),
\]
as a triangulation with boundary a one-vertex torus, also with two edges running along meridian $\mu_C$ and homological longitude $\lambda_C$. To build the satellite knot with pattern knot $P$ and companion knot $C$, we glue the two triangulations along their boundaries such that the edge running along the meridian in one boundary component is glued to the edge running along the longitude in the other boundary component, and vice versa:
\[
\phi:T_{A}\longrightarrow \partial X_{C},
\qquad 
\phi(\mu_{A})=\lambda_{C}, 
\qquad
\phi(\lambda_{A})=\mu_{C}.
\]
Note that we can always achieve that edges run along meridian and longitude by first performing an appropriate sequence of layerings on the boundary components $T_{A}$ and $X_{C}$.

The resulting glued manifold
\[
X_{P(C)} = X_{C} \cup_{\phi}X_{P}
\]
is the exterior of the satellite knot with pattern knot $P$ and companion knot $C$ represented by the ideal edge in the triangulation of $X_{P}$. 

We can iterate this construction, by identifying another axis boundary component in the triangulation of $X_{P(C)}$. This construction can be made more efficient, if, in a pre-processing step, we pinch together vertices on distinct (edge ideal or real) boundary components.

\subsection{Knots in lens spaces}
\label{sec:open-lens}

\Cref{prop:torus-knot-edge} requires a layered one-vertex triangulation of a closed $3$-manifold to admit a genus-$1$ Heegaard splitting.  Replacing a layered triangulation of $\sthree$ by a layered triangulation of an arbitrary lens space $L(\lensp,\lensq)$ enables an analogous construction for edges representing torus knots in lens spaces.

\begin{question}
\label{q:lens}
For which pairs $\bigl( L(\lensp, \lensq), \, K \bigr)$ with $K \subset L(\lensp,\lensq)$ a knot is $K$ realised as an edge in a layered one-vertex triangulation of $L(\lensp,\lensq)$, and what is the resulting triangulation complexity of the pair?
\end{question}

The case $\lensp=0$, corresponding to $\stwo \times \sone$ via the standard genus-$1$ Heegaard splitting, is already nontrivial and would describe knots in $\stwo \times \sone$ of bounded triangulation complexity in terms of Farey combinatorics, in the spirit of~\cite{JacoRubinstein2006}.

\bibliographystyle{amsalpha}
\bibliography{references}

@misc{Regina,
    author       = {Burton, Benjamin A. and Budney, Ryan and Pettersson, William and others},
    title        = {Regina: Software for low-dimensional topology},
    howpublished = {\url{https://regina-normal.github.io/}},
    year         = {1999--2025},
}

@article{Brocot1861,
    author  = {Brocot, Achille},
    title   = {Calcul des rouages par approximation, nouvelle m\'ethode},
    journal = {Revue Chronom\'etrique},
    volume  = {3},
    year    = {1861},
    pages   = {186--194}
}

@inproceedings{Burton2011,
    author    = {Burton, Benjamin A.},
    title     = {Detecting genus in vertex links for the fast enumeration of 3-manifold triangulations},
    year      = {2011},
    isbn      = {9781450306751},
    publisher = {Association for Computing Machinery},
    address   = {New York, NY, USA},
    url       = {https://doi.org/10.1145/1993886.1993901},
    doi       = {10.1145/1993886.1993901},
    booktitle = {Proceedings of the 36th International Symposium on Symbolic and Algebraic Computation},
    pages     = {59–66},
    numpages  = {8},
    location  = {San Jose, California, USA},
    series    = {ISSAC '11}
}

@InProceedings{Burton2020,
    author    =	{Burton, Benjamin A.},
    title     =	{{The Next 350 Million Knots}},
    booktitle =	{36th International Symposium on Computational Geometry (SoCG 2020)},
    pages     =	{25:1--25:17},
    series    =	{Leibniz International Proceedings in Informatics (LIPIcs)},
    ISBN      =	{978-3-95977-143-6},
    ISSN      =	{1868-8969},
    year      =	{2020},
    volume    =	{164},
    editor    =	{Cabello, Sergio and Chen, Danny Z.},
    publisher =	{Schloss Dagstuhl -- Leibniz-Zentrum f{\"u}r Informatik},
    address   =	{Dagstuhl, Germany},
    URL       =		{https://drops.dagstuhl.de/entities/document/10.4230/LIPIcs.SoCG.2020.25},
    URN       = {urn:nbn:de:0030-drops-121831},
    doi       = {10.4230/LIPIcs.SoCG.2020.25}
}

@misc{SnapPy,
    author       = {Culler, Marc and Dunfield, Nathan M. and Goerner, Matthias and Weeks, Jeffrey R.},
    title        = {SnapPy, a computer program for studying the geometry and topology of $3$-manifolds},
    howpublished = {Available at \url{https://snappy.computop.org/}},
    note         = {Version 3.2}
}

@book{Fowler1987,
    author    = {Fowler, David H.},
    title     = {The Mathematics of {P}lato's {A}cademy: a new reconstruction},
    publisher = {Clarendon Press},
    address   = {Oxford},
    year      = {1987},
    isbn      = {0-19-853912-6}
}

@book{GKP1994,
    author    = {Graham, Ronald L. and Knuth, Donald E. and Patashnik, Oren},
    title     = {Concrete Mathematics: A Foundation for Computer Science},
    edition   = {2},
    publisher = {Addison-Wesley},
    address   = {Reading, MA},
    year      = {1994},
    isbn      = {0-201-55802-5}
}

@book{HatcherTopologyOfNumbers,
    author    = {Hatcher, Allen},
    title     = {Topology of Numbers},
    publisher = {American Mathematical Society},
    address   = {Providence, RI},
    year      = {2022},
    isbn      = {978-1-4704-5611-5},
    url       = {https://pi.math.cornell.edu/~hatcher/TN/TNbook.pdf}
}

@book{Hempel1976,
    author    = {Hempel, John},
    title     = {3-{M}anifolds},
    publisher = {Princeton University Press},
    address   = {Princeton, NJ},
    series    = {Annals of Mathematics Studies},
    number    = {86},
    year      = {1976}
}

@article{HassLagariasPippenger1999,
    author     = {Hass, Joel and Lagarias, Jeffrey C. and Pippenger, Nicholas},
    title      = {The computational complexity of knot and link problems},
    year       = {1999},
    issue_date = {March 1999},
    publisher  = {Association for Computing Machinery},
    address    = {New York, NY, USA},
    volume     = {46},
    number     = {2},
    issn       = {0004-5411},
    url        = {https://doi.org/10.1145/301970.301971},
    doi        = {10.1145/301970.301971},
    journal    = {Journal of the ACM},
    month      = mar,
    pages      = {185–211},
    numpages   = {27},
}

@article{HeMorganThompson2025,
    author        = {He, Alexander and Morgan, James and Thompson, Em K.},
    title         = {An algorithm to construct one-vertex triangulations of {H}eegaard splittings},
    volume        = {16}, 
    url           = {https://jocg.org/index.php/jocg/article/view/4779}, 
    DOI           = {10.20382/jocg.v16i1a18},
    number        = {1},
    journal       = {Journal of Computational Geometry},
    year          = {2025},
    month         = {Nov.},
    pages         = {635--693},
    eprint        = {2312.17556},
    archiveprefix = {arXiv},
    primaryclass  = {math.GT}
}

@inproceedings{HuszarSpreer2019,
    author        =	{Husz\'{a}r, Krist\'{o}f and Spreer, Jonathan},
    title         =	{{3-Manifold Triangulations with Small Treewidth}},
    booktitle     = {35th International Symposium on Computational Geometry ({SoCG} 2019)},
    pages         =	{44:1--44:20},
    series        = {Leibniz International Proceedings in Informatics (LIPIcs)},
    ISBN          =	{978-3-95977-104-7},
    ISSN          =	{1868-8969},
    year          = {2019},
    volume        = {129},
    editor        =	{Barequet, Gill and Wang, Yusu},
    publisher     =	{Schloss Dagstuhl -- Leibniz-Zentrum f{\"u}r Informatik},
    address       =	{Dagstuhl, Germany},
    URL           = {https://drops.dagstuhl.de/entities/document/10.4230/LIPIcs.SoCG.2019.44},
    URN           = {urn:nbn:de:0030-drops-104487},
    doi           =	{10.4230/LIPIcs.SoCG.2019.44},
    eprint        = {1812.05528},
    archiveprefix = {arXiv},
    primaryclass  = {math.GT}
}

@inproceedings{HeSedgwickSpreer2025,
    author        = {He, Alexander and Sedgwick, Eric and Spreer, Jonathan},
    title         = {A Practical Algorithm for Knot Factorisation},
    booktitle     = {41st International Symposium on Computational Geometry (SoCG 2025)},
    pages         = {55:1--55:15},
    series        = {Leibniz International Proceedings in Informatics (LIPIcs)},
    ISBN          =	{978-3-95977-370-6},
    ISSN          =	{1868-8969},
    year          = {2025},
    volume        = {332},
    editor        =	{Aichholzer, Oswin and Wang, Haitao},
    publisher     = {Schloss Dagstuhl -- Leibniz-Zentrum f\"ur Informatik},
    address       =	{Dagstuhl, Germany},
    URL           = {https://drops.dagstuhl.de/entities/document/10.4230/LIPIcs.SoCG.2025.55},
    URN           = {urn:nbn:de:0030-drops-232075},
    doi           = {10.4230/LIPIcs.SoCG.2025.55},
    eprint        = {2504.03942},
    archiveprefix = {arXiv},
    primaryclass  = {math.GT}
}

@misc{IbarraMathewsPurcellSpreer2024,
    author        = {Ibarra, Dionne and Mathews, Daniel V. and Purcell, Jessica S. and Spreer, Jonathan},
    title         = {Triangulations of the {$3$}-sphere with knotted edge},
    year          = {2024},
    eprint        = {2411.18938},
    archiveprefix = {arXiv},
    primaryclass  = {math.GT},
    url           = {https://arxiv.org/abs/2411.18938},
    note          = {Preprint, \href{https://arxiv.org/abs/2411.18938}{\texttt{arXiv:2411.18938}}}
}

@misc{JacoRubinstein2006,
    author        = {Jaco, William and Rubinstein, J. Hyam},
    title         = {Layered-triangulations of 3-manifolds},
    year          = {2006},
    eprint        = {math/0603601},
    archiveprefix = {arXiv},
    primaryclass  = {math.GT},
    url           = {https://arxiv.org/abs/math/0603601},
    note          = {Preprint, \href{https://arxiv.org/abs/math/0603601}{\texttt{arXiv:math/0603601}}}
}

@article{JacoRubinsteinSpreerTillmann2020,
    author        = {Jaco, William and Rubinstein, J. Hyam and Spreer, Jonathan and Tillmann, Stephan},
    title         = {{$\mathbb{Z}_2$}-{T}hurston norm and complexity of 3-manifolds, {II}},
    journal       = {Algebraic \& Geometric Topology},
    volume        = {20},
    number        = {1},
    pages         = {503--529},
    year          = {2020},
    doi           = {10.2140/agt.2020.20.503},
    eprint        = {1711.10737},
    archiveprefix = {arXiv},
    primaryclass  = {math.GT}
}

@article{JacoRubinsteinSpreerTillmann2025,
    author  = {Jaco, William and Rubinstein, J. Hyam and Spreer, Jonathan and Tillmann, Stephan},
    title   = {Complexity of 3-manifolds obtained by {D}ehn filling},
    journal = {Algebraic \& Geometric Topology},
    volume  = {25},
    number  = {1},
    year    = {2025},
    pages   = {301--327},
    doi     = {10.2140/agt.2025.25.301}
}

@article{JacoRubinsteinTillmann2009,
    author        = {Jaco, William and Rubinstein, J. Hyam and Tillmann, Stephan},
    title         = {Minimal triangulations for an infinite family of lens spaces},
    journal       = {Journal of Topology},
    volume        = {2},
    number        = {1},
    pages         = {157--180},
    doi           = {https://doi.org/10.1112/jtopol/jtp004},
    url           = {https://londmathsoc.onlinelibrary.wiley.com/doi/abs/10.1112/jtopol/jtp004},
    year          = {2009},
    eprint        = {0805.2425},
    archiveprefix = {arXiv},
    primaryclass  = {math.GT}
}

@article{Kauffman1987,
    author  = {Kauffman, Louis H.},
    title   = {State models and the jones polynomial},
    journal = {Topology},
    volume  = {26},
    number  = {3},
    pages   = {395--407},
    year    = {1987},
    issn    = {0040-9383},
    doi     = {https://doi.org/10.1016/0040-9383(87)90009-7},
    url     = {https://www.sciencedirect.com/science/article/pii/0040938387900097},
}

@article{KronheimerMrowkaOzsvathSzabo2007,
    author        = {Kronheimer, Peter B. and Mrowka, Tomasz S. and Ozsv\'ath, Peter and Szab\'o, Zolt\'an},
    title         = {Monopoles and lens space surgeries},
    journal       = {Annals of Mathematics},
    series        = {2},
    volume        = {165},
    number        = {2},
    year          = {2007},
    pages         = {457--546},
    doi           = {https://doi.org/http://doi.org/10.4007/annals.2007.165.457},
    eprint        = {math/0310164},
    archiveprefix = {arXiv},
    primaryclass  = {math.GT}
}

@incollection{Lackenby2020,
    author        = {Lackenby, Marc},
    title         = {Algorithms in 3-manifold theory},
    booktitle     = {Surveys in 3-manifold topology and geometry},
    editor        = {Agol, Ian and Gabai, David},
    series        = {Surveys in Differential Geometry},
    volume        = {25},
    number        = {1},
    publisher     = {International Press},
    address       = {Boston, MA},
    year          = {2020},
    pages         = {163--213},
    doi           = {10.4310/SDG.2020.v25.n1.a5},
    eprint        = {2002.02179},
    archiveprefix = {arXiv},
    primaryclass  = {math.GT}
}

@article{Lame1844,
    author  = {Lam\'e, Gabriel},
    title   = {Note sur la limite du nombre des divisions dans la recherche du plus grand commun diviseur entre deux nombres entiers},
    journal = {Comptes rendus de l'Acad\'emie des sciences},
    volume  = {19},
    year    = {1844},
    pages   = {867--870}
}

@article{Lickorish1962,
    author  = {Lickorish, W. B. R.},
    title   = {A representation of orientable combinatorial 3-manifolds},
    journal = {Annals of Mathematics},
    series  = {2},
    volume  = {76},
    number  = {3},
    year    = {1962},
    pages   = {531--540},
    doi     = {10.2307/1970373}
}

@misc{Lin2026Github,
    author       = {Lin, Lezhi},
    title        = {{TorusKnots}},
    howpublished = {Available at \url{https://github.com/HimalayanRainstorm/TorusKnots}},
    year         = {2026},
}

@article{LackenbyPurcell2024,
    author        = {Lackenby, Marc and Purcell, Jessica S.},
    title         = {The triangulation complexity of fibred 3-manifolds},
    journal       = {Geometry \& Topology},
    volume        = {28},
    number        = {4},
    pages         = {1727--1828},
    year          = {2024},
    doi           = {10.2140/gt.2024.28.1727},
    eprint        = {1910.10914},
    archiveprefix = {arXiv},
    primaryclass  = {math.GT}
}

@book{Matveev2007,
    author    = {Matveev, Sergei},
    title     = {Algorithmic Topology and Classification of $3$-Manifolds},
    publisher = {Springer},
    address   = {Berlin},
    year      = {2007},
    edition   = {2},
    series    = {Algorithms and Computation in Mathematics},
    volume    = {9}
}

@article{Moise1952,
    author  = {Moise, Edwin E.},
    title   = {Affine Structures in $3$-Manifolds. {V}. {T}he Triangulation Theorem and {H}auptvermutung},
    journal = {Annals of Mathematics},
    series  = {2},
    volume  = {56},
    year    = {1952},
    pages   = {96--114},
    doi     = {10.2307/1969769}
}

@article{Moriah1988,
    author   = {Moriah, Yoav},
    title    = {Heegaard splittings of {S}eifert fibered spaces},
    journal  = {Inventiones Mathematicae},
    volume   = {91},
    number   = {3},
    pages    = {465--481},
    year     = {1988},
    doi      = {10.1007/BF01388781},
    isbn     = {1432-1297},
    mrnumber = {928492}
}

@article{Moser1971,
    author  = {Moser, Louise},
    title   = {Elementary surgery along a torus knot},
    journal = {Pacific Journal of Mathematics},
    volume  = {38},
    number  = {3},
    year    = {1971},
    pages   = {737--745},
    doi     = {10.2140/pjm.1971.38.737}
}

@article{Murasugi1991,
    author  = {Murasugi, Kunio},
    title   = {On the braid index of alternating links},
    journal = {Transactions of the American Mathematical Society},
    volume  = {326},
    number  = {1},
    year    = {1991},
    pages   = {237--260},
    doi     = {10.1090/S0002-9947-1991-1000333-3}
}

@article{OzsvathSzabo2005,
    author  = {Ozsv\'ath, Peter and Szab\'o, Zolt\'an},
    title   = {On knot {F}loer homology and lens space surgeries},
    journal = {Topology},
    volume  = {44},
    number  = {6},
    year    = {2005},
    pages   = {1281--1300},
    doi     = {10.1016/j.top.2005.05.001}
}

@article{Reidemeister1935,
    author  = {Reidemeister, Kurt},
    title   = {Homotopieringe und {L}insenr\"aume},
    journal = {Abhandlungen aus dem Mathematischen Seminar der Universit\"at Hamburg},
    volume  = {11},
    number  = {1},
    pages   = {102--109},
    year    = {1935},
    doi     = {10.1007/BF02940717}
}

@book{Rolfsen2003,
    author    = {Rolfsen, Dale},
    title     = {Knots and Links},
    publisher = {AMS Chelsea Publishing},
    address   = {Providence, RI},
    year      = {2003},
    volume    = {346}
}

@book{Saveliev2011,
    author    = {Saveliev, Nikolai},
    title     = {Lectures on the Topology of 3-Manifolds: An Introduction to the Casson Invariant},
    edition   = {2},
    series    = {De Gruyter Textbook},
    publisher = {De Gruyter},
    address   = {Berlin},
    year      = {2011},
    isbn      = {978-3-11-025035-0},
    doi       = {10.1515/9783110250367}
}

@article{Schultens1993,
    author  = {Schultens, Jennifer},
    title   = {The classification of {H}eegaard splittings for (compact orientable surface)$\,\times\,S^1$},
    journal = {Proceedings of the London Mathematical Society},
    volume  = {s3-67},
    number  = {2},
    year    = {1993},
    pages   = {425--448},
    doi     = {https://doi.org/10.1112/plms/s3-67.2.425},
    url     = {https://londmathsoc.onlinelibrary.wiley.com/doi/abs/10.1112/plms/s3-67.2.425},
    eprint  = {https://londmathsoc.onlinelibrary.wiley.com/doi/pdf/10.1112/plms/s3-67.2.425},
}

@article{Stern1858,
    author  = {Stern, Moritz A.},
    title   = {{\"U}ber eine zahlentheoretische {F}unktion},
    journal = {Journal f\"ur die reine und angewandte Mathematik},
    volume  = {55},
    year    = {1858},
    pages   = {193--220},
    doi     = {10.1515/crll.1858.55.193}
}

@article{Teragaito2004,
    author        = {Teragaito, Masakazu},
    title         = {Crosscap numbers of torus knots},
    journal       = {Topology and its Applications},
    volume        = {138},
    number        = {1},
    pages         = {219--238},
    year          = {2004},
    issn          = {0166-8641},
    doi           = {https://doi.org/10.1016/j.topol.2003.08.004},
    url           = {https://www.sciencedirect.com/science/article/pii/S0166864103002840},
}

@article{Thurston1982,
    author  = {Thurston, William P.},
    title   = {Three dimensional manifolds, {K}leinian groups and hyperbolic geometry},
    journal = {Bulletin of the American Mathematical Society},
    series  = {New Series},
    volume  = {6},
    number  = {3},
    year    = {1982},
    pages   = {357--381},
    doi     = {10.1090/S0273-0979-1982-15003-0}
}

@book{Thurston1997,
    author    = {Thurston, William P.},
    title     = {Three-Dimensional Geometry and Topology. {V}olume 1},
    publisher = {Princeton University Press},
    address   = {Princeton, NJ},
    year      = {1997},
    isbn      = {9780691083049},
    note      = {Edited by Silvio Levy} 
}

@article{Tietze1908,
    author  = {Tietze, Heinrich},
    title   = {{\"U}ber die topologischen {I}nvarianten mehrdimensionaler {M}annigfaltigkeiten},
    journal = {Monatshefte f\"ur Mathematik und Physik},
    volume  = {19},
    number  = {1},
    pages   = {1--118},
    year    = {1908},
    doi     = {10.1007/BF01736688}
}

@article{Wallace1961,
    author  = {Wallace, Andrew H.},
    title   = {Modifications and cobounding manifolds},
    journal = {Canadian Journal of Mathematics},
    volume  = {12},
    year    = {1960},
    pages   = {503--528},
    doi     = {10.4153/CJM-1960-045-7}
}

@article{Whitehead1941,
    author    = {Whitehead, J. H. C.},
    title     = {{On Incidence Matrices, Nuclei and Homotopy Types}},
    journal   = {Annals of Mathematics},
    publisher = {[Annals of Mathematics, Trustees of Princeton University on Behalf of the Annals of Mathematics, Mathematics Department, Princeton University]},
    volume    = {42},
    number    = {5},
    pages     = {1197--1239},
    year      = {1941},
    ISSN      = {0003486X, 19398980},
    URL       = {http://www.jstor.org/stable/1970465}, 
}

\appendix

\section*{Source Code}

An implementation of our procedure is available from \cite{Lin2026Github}. The authors would also like to explicitly point to the very useful software packages {\em Regina} \cite{Regina} and {\em SnapPy} \cite{SnapPy}.

\end{document}